\renewcommand{\leq}{\leqslant}
\newcommand{\C}{\mathbb{C}}
\newcommand{\I}{\mathrm{I}}
\let\join\relax
\DeclareMathOperator*{\argmin}{argmin}
\DeclareMathOperator*{\argmax}{argmax}
\DeclareMathOperator*{\res}{res}
\DeclareMathOperator*{\GL}{GL}
\DeclareMathOperator{\join}{\dot{+}}
\DeclareMathOperator{\ind}{ind}
\DeclareMathOperator{\spa}{sp}
\DeclareMathOperator{\interior}{int}
\newtheorem{lemma}{Lemma}
\begin{document}

\renewcommand{\figurename}{\small {\sc Figure\@}}
\renewcommand{\tablename}{\small {\sc Table\@}}

\title[Automatic Deformation of Riemann--Hilbert Problems]{Automatic Deformation of Riemann--Hilbert Problems with Applications to the Painlevé II Transcendents}

\author{Georg Wechslberger}
\address{Zentrum Mathematik -- M3, Technische Universität München,
         80290~München, Germany}
\email{wechslbe@ma.tum.de}
\author{Folkmar Bornemann}
\address{Zentrum Mathematik -- M3, Technische Universität München,
         80290~München, Germany}
\email{bornemann@tum.de}
\date{\today}


\begin{abstract}

The stability and convergence rate of Olver's collocation method for the numerical
solution of Riemann--Hilbert problems (RHPs) is known to depend very sensitively 
on the particular choice of contours used as data of the RHP. By {\em manually}
performing contour deformations that proved to be successful in the asymptotic
analysis of RHPs, such as the {method of nonlinear steepest descent}, 
the numerical method can basically be preconditioned, making it asymptotically stable. 
In this paper, however, we will show that most of these preconditioning deformations, including lensing, can be addressed
in an \emph{automatic}, completely algorithmic fashion that would turn the numerical method into a black-box solver.
To this end, the preconditioning of RHPs is recast as a discrete, graph-based optimization problem: the deformed
contours are obtained as a system of shortest paths within a planar graph weighted by the relative strength of the jump
matrices. The algorithm is illustrated for the RHP representing the Painlevé II transcendents.


\end{abstract}

\maketitle

\section{Introduction}\label{sec:intro}

Remarkably many integrable problems in mathematics, mathematical physics, and applied mathematics can be cast as Riemann--Hilbert problems (RHPs): classical orthogonal polynomials and special functions, Painlevé transcendents, nonlinear PDEs related to the inverse scattering transform, and distributions in random matrix theory as well as in random combinatorial problems, to name just a few~\cite{MR2011605}. A fruitful point of view is that RHPs generalize the representation of classical special functions
by contour integrals; like these they have extensively been used in establishing deep asymptotic results such as connection
formulae for the Painlevé transcendents \cite{fokas:2006:ptr}. Here, a fundamental tool is the \emph{method of nonlinear steepest descent}, which was introduced by Deift and Zhou \cite{MR1207209} to the asymptotic analysis of oscillatory RHPs.

Only quite recently, starting with a novel direct spectral collocation method of Olver \cite{Olver:2011:NSR:1967343.1967345}, RHPs have become the subject of study in numerical analysis. It turned out that the stability of this numerical method
and its approximation properties strongly depends on matching most of the steps from the asymptotic analysis of the problem at hand: ``One can expect that whenever the method of nonlinear steepest descent produces an asymptotic formula, the numerical method can be made asymptotically stable'' \cite[p.~2]{1205.5604}. This way, the method is kind of {\em hybrid}: only after manually performing a series of expert analytic steps to deform the given RHP into another, equivalent, one, the thus ``preconditioned'' RHP is taken as input to the numerical algorithm. Hence, the use of this numerical method has been limited so far to an audience that would have competent operational access to such a kind of expert knowledge. 

In this paper we will give a \emph{proof of concept} that most, if not all, of these deformations (at least if they were meant to
stabilize the numerical method) can be 
addressed with an automatic, completely algorithmic approach that would turn the numerical method into a black-box solver for the user.\footnote{To begin with, in this paper we study plain contour deformations and lensing; other important concepts of deformations, such as $g$-functions, will be subject of subsequent refinements of our work.} Following 
Bornemann and Wechslberger \cite{1107.0498v2}, who dealt with similar problems for contour integrals, we will recast the preconditioning
of RHPs as a discrete, graph-based optimization problem: the desired deformation corresponds to a system of shortest paths within a weighted planar graph.\footnote{A {\em walk}  in a graph is a sequence of adjacent vertices, a {\em path} is a simple (non self-intersecting) walk.} 
Though we are not able, at this stage of our study, to determine the precise complexity class of this particular discrete optimization problem
(NP-hard, polynomial, etc.) or to prove that our polynomial greedy algorithm would approximate 
 the optimum within a certain range (which would be sufficient for the purpose of preconditioning), we will demonstrate for the example of the Painlevé II transcendents that, first, it will improve the stability of the input RHP significantly by several orders of magnitude and that, second, the resulting deformations of the RHP closely match what people have obtained by applying the method of nonlinear steepest descent. 
 
\subsection*{Riemann--Hilbert problems} To fix the notation, we 
consider RHPs for a given oriented contour $\Gamma$, which is
a finite union of simple smooth curves $\Gamma_j$ ($j=1,\ldots,k$) in $\C$. 
By removing the finitely many points of self-intersection from $\Gamma$ we obtain $\Gamma^0\subset \Gamma$. 
Given a matrix-valued jump function $G:\Gamma^0\to \GL(m,\C)$, the RHP 
determines a holomorphic function $\Phi : \C \setminus \Gamma \to \GL(m,\C)$ satisfying\footnote{The second condition is meant to imply that $\Phi$ has a holomorphic continuation at $\infty$.}
\[
	\Phi^+(z) = \Phi^-(z) G(z) \quad (z \in \Gamma^0), \qquad
	\Phi(\infty) = \I.
\]
Here, $\Phi^\pm(z)$ denotes the non-tangential limit of $\Phi(z')$ as $z' \to z$ from the positive (negative) side of the contour. 
Existence and uniqueness of a solution $\Phi$ can be shown under some appropriate
smoothness and decay assumptions on the jump function $G$  \cite{MR1677884}. To simplify the discussion of
contour deformations, we assume that there are entire functions $G_j: \C\to \GL(m,\C)$ that continue
the jump data $G$ given on the part $\Gamma_j$ of the contour $\Gamma$:
\[
G|_{\Gamma^0\cap\Gamma_j} = G_j|_{\Gamma^0\cap\Gamma_j}.
\]
We consider the pairs $(\Gamma_j,G_j)$ ($j=1,\ldots, k$) as the data of the RHP. Most often one is not interested
in the full solution $\Phi(z)$ of the RHP but only on some derived quantities at $\infty$, e.g., the
residue
\[
\res_{z=\infty} \Phi(z) = \lim_{z\to\infty} z (I - \Phi(z)).
\]

\subsubsection*{Example: Painlevé II} Throughout this paper we will illustrate our ideas for the RHP representing
the  Painlevé II equation
\[
u_{xx} = x u + 2u^3.
\]
The general solution $u(x)=u(x;s_1,s_2)$ of this second-order ODE in the complex domain will depend on two independent complex parameters $s_1$ and $s_2$,\footnote{In the singular case $s_1=s_2=\pm i$, there is 
a one-parameter family of solutions depending on $s_3$.} which are fixed in the following setup of the RHP: with the six rays (see Fig.~\ref{fig:rays})
\[
\Gamma_j=\{s e^{i \pi(2j-1)/6}: s \ge 0\}\qquad (j=1,\ldots,6),
\]
parameters $s_j$ ($j=1,\ldots,6)$ interrelated by
\[
s_1 -s_2+s_3+s_1s_2s_3 =0, \quad s_4=-s_1,\quad s_5=-s_2, \quad s_6=-s_3,
\] 
and the jump matrices 
\[
G_j(z) = \begin{cases}
\begin{pmatrix}
1 & s_j e^{-\theta(z)} \\
0 & 1
\end{pmatrix} & \text{$j$ even}, \\*[6mm]
\begin{pmatrix}
1 & 0 \\
s_j e^{+\theta(z)} & 1
\end{pmatrix} & \text{$j$ odd},
\end{cases}
\]
with the phase function 
\[
\theta(z) = \frac{8i}{3}z^3+2i x z
\]
the solution $\Phi$ of the RHP yields 
\[
u(x;s_1,s_2) = -2 \res_{z=\infty} \Phi_{1,2}(z) = 2\lim_{z\to\infty} z\,\Phi_{1,2}(z).
\]
Note that the independent variable $x$ of the ODE enters the RHP as a parameter of the phase function $\theta$: the RHP
(with independent variable $z$) amounts thus for a \emph{pointwise} evaluation of the Painlevé transcendent $u(x;s_1,s_2)$.

\begin{figure}[tbp]
\begin{minipage}{0.375\textwidth}
\begin{center}
	\includegraphics[width=\textwidth]{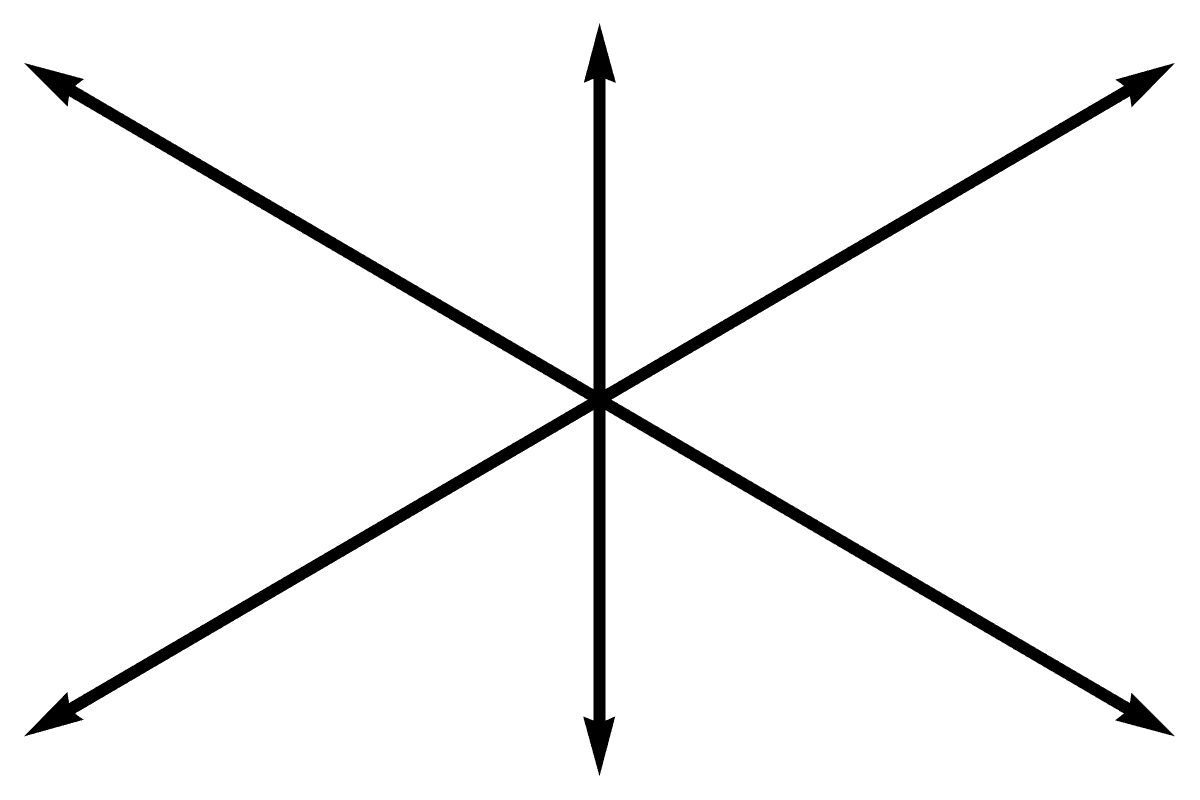}
\end{center}
\end{minipage}
\caption{The six rays $\Gamma_j$ of the RHP representing Painlevé II.}
\label{fig:rays}
\end{figure}

\subsection*{The numerical method of Olver} Olver \cite{Olver:2011:NSR:1967343.1967345} constructed his
spectral collocation method by recasting RHPs as a particular kind of singular integral equation. Upon
writing
\[
\Phi(z) = I + C_\Gamma U(z)
\]
with the Cauchy transform of a matrix-valued function $U : \Gamma \to \GL(m,\C)$, namely
\[
C_\Gamma U(z) = \frac{1}{2\pi i} \int_\Gamma \frac{U(z)}{\zeta - z}\,d\zeta,
\]
the RHP becomes the linear operator equation
\begin{equation}\label{eq:sie}
A U (z) = U(z) - C_\Gamma^- U(z) \cdot (G(z)-I) = G(z)-I. 
\end{equation}
Here, $C_\Gamma^\pm U(z)$ denotes the non-tangential limit of $C_\Gamma U(z')$ as $z' \to z$ from the positive (negative) side of the contour; there is the operator identity $C_\Gamma^+ - C_\Gamma^- = I$. The residue at $\infty$ becomes
simply the integral
\[
\res_{z=\infty} \Phi(z) = \frac{1}{2\pi i}\int_{\Gamma} U(\zeta)\,d\zeta.
\]
Without going into details, in this paper it suffices to note that the $n$-point numerical approximation of
(\ref{eq:sie}) yields a finite-dimensional linear system
\[
A_n U_n = b_n
\]
where the $j$th component of the solution $U_n$ is a matrix that approximates $U(z_j)$ at the collocation point $z_j \in \Gamma$ ($j=1,\ldots,n$). The stability of the method is essentially described by the condition number 
\[
\kappa_n = \kappa(A_n) = \|A_n^{-1}\|\cdot \|A_n\| 
\]
of this linear system: altogether, one would typically suffer a loss of $\log_{10}\kappa_n$ significant digits.
Under an additional assumption, which can be checked {\em a posteriori} within the numerical
method itself, Olver and Trogdon \cite[Assumpt.~6.1 and Lemma~6.1]{1205.5604} proved a bound of the form\footnote{They employ
the estimate $\|A\| \le \sqrt 2(1+ \|G-I\|_{L^\infty(\Gamma)}\|C_\Gamma^-\|)$ in their statements.}
\[
\kappa_n = O(\kappa(A)) 
\]
in terms of the condition number $\kappa(A) = \|A^{-1}\|\cdot \|A\|$ of the continuous operator $A$, 
with constants that are midly growing in the number of collocation points $n$. Here, the operator norm of $A$
is obtained by acting on $L^2(\Gamma)$. Extending $U_n$ to all of $\Gamma$ by interpolation, Olver and
Trogdon \cite[Eq.~(6.1)]{1205.5604} also state an error estimate of the form
\[
\|U-U_n\|_{L^2(\Gamma)} \leq c \kappa(A) n^{2+\beta-k} \|U\|_{H^k(\Gamma)}
\]
with some $\beta > 0$. Since, for jump matrices $G$ that are piecewise restrictions of
entire functions, $k$ can be chosen arbitrarily large, one gets spectral accuracy. 

\begin{figure}[tbp]
	\begin{center}
		\begin{tikzpicture}
			\begin{semilogyaxis}[xlabel={\small $x$},
				ylabel={\small condition number},
				grid=major,
				no markers,
				legend pos=north east,
				ymin=1,
				xmin=-30,
				xmax=-10
				]
				\addplot[very thick,blue] table[x index=0,y index=1] {plots/PII-conds.dat};
				\addplot[very thick,red] table[x index=0,y index=2] {plots/PII-conds.dat};
				\addplot[very thick,black,dashed] table[x index=0,y index=1] {plots/PII-conds-eps.dat};
				\legend{{\small original contour},{\small deformed contour}}
			\end{semilogyaxis}
		\end{tikzpicture}
	\end{center}
	\caption{Comparison of the condition number of the original contour and of a deformed contour optimized by the greedy algorithm of 
	§\ref{sec:algorithm} for the Painleve II RHP with $(s_1,s_2) = (1,2)$. The condition number of the deformed contour is roughly constant for all values of $x$ while the condition number of the original contour grows exponentially fast for decreasing values of $x$. Note that condition numbers larger
	than $10^{16}$ (dashed line) obstruct the computation of even a single accurate digit in machine arithmetic, indicating severe numerical instability.}\label{fig:cond}
\end{figure}
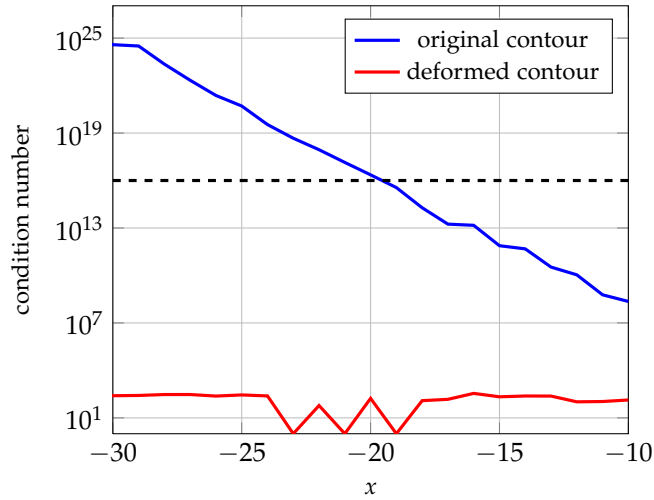

\subsection*{Preconditioning of RHPs} Thus, stability and accuracy of the numerical method depend on $\kappa(A)$, which blows
up in many problems of interest. For instance, the undeformed version of the RHP for Painlevé II with the contours in Fig.~\ref{fig:rays} has $\kappa_n \approx 2.2\cdot 10^8$ for $s_1=1$ and $s_2=2$ and $x=-10$ (see also Fig.~\ref{fig:cond} vor varying $x$). 

Now, it is important to understand that $\kappa(A)$ is the condition number of the RHP for the 
restricted data $(\Gamma,G)$ but not for the jump data $G_j$ ($j=1,\ldots,k$) 
which are obtained from analytic continuation. If the 
continued data are \emph{explicitly} given, and are not themselves part of the computational problem,\footnote{Analytic continuation corresponds to solving a Cauchy problem for
the elliptic Cauchy--Riemann differential equations; it is, therefore, an ill-posed problem.} it should be possible
to deform the RHP to an equivalent one with data $(\tilde \Gamma, \tilde G)$ and $$\kappa(\tilde A) \ll \kappa(A).$$
We call such a deformation a {\em preconditioning} one.
In fact, Olver and Trogdon \cite{1205.5604} argued that preconditioning \emph{is} possible whenever
the method of nonlinear steepest descent produces an asymptotic formula; Fig.~\ref{fig:olver} shows a typical
sequence of such manually constructed preconditioning deformations for the Painlevé II RHP. 

Though it seems to be difficult to extract a single governing principle for all the ingenious deformations
that are used in the asymptotic analysis of RHPs, we base our algorithmic approach on the following simple
observation: if there are no jumps in the RHP, that is if $G\equiv I$, we have $A=I$ and therefore $\kappa(A)=1$.
By continuity, $G \to I$ in some sufficiently strong norm would certainly imply $\kappa(A)\to 1$, such that a
reasonably small $\|G-I\|$ will probably yield a moderately sized condition number $\kappa(A)$. 
We conjecture that such an estimate can be cast in the form
\[
\kappa(A) \leq \phi(\|G-I\|_{W^{s,p}(\Gamma)})
\]
for some Sobolev $W^{s,p}$-norm and some monotone function $\phi$ that is independent of $(\Gamma,G)$. A good
preconditioning strategy would then be to make $\|G-I\|_{W^{s,p}(\Gamma)}$ as small as possible, we call it the
{\em relative strength} of the jump matrix $G$.
  
In the lack of any better understanding of the precise dependence of $\kappa(A)$ on the RHP data $(\Gamma,G)$ we
suggest to use $\|G-I\|_{L^1(\Gamma)}$ as a measure of relative strength: optimizing it led to significant reductions of the
condition number in all of our experiments. However, the deformation algorithm itself will just use that the
measure $d(\Gamma;G)$ can be written as an integral over $\Gamma$, namely in the form
\[
d(\Gamma;G) = \int_\Gamma d(G(z))\, d|z|
\] 
for some function $d: \GL(m,\C) \to [0,\infty)$, which we call the \emph{local weight}. 
 
\subsection*{Preconditioning as a discrete optimization problem}

Since our objective is preconditioning, the relative strength of the jump matrices does not really have to be 
minimized over all equivalent deformations $(\tilde\Gamma,\tilde G)$ of a given RHP $(\Gamma,G)$. For all practical purposes
it suffices to consider just a very coarse, finite set of possible contours, namely paths within a 
planar graph. 

\begin{figure}[tbp]
\begin{minipage}{0.45\textwidth}
\begin{center}
	\includegraphics[width=\textwidth]{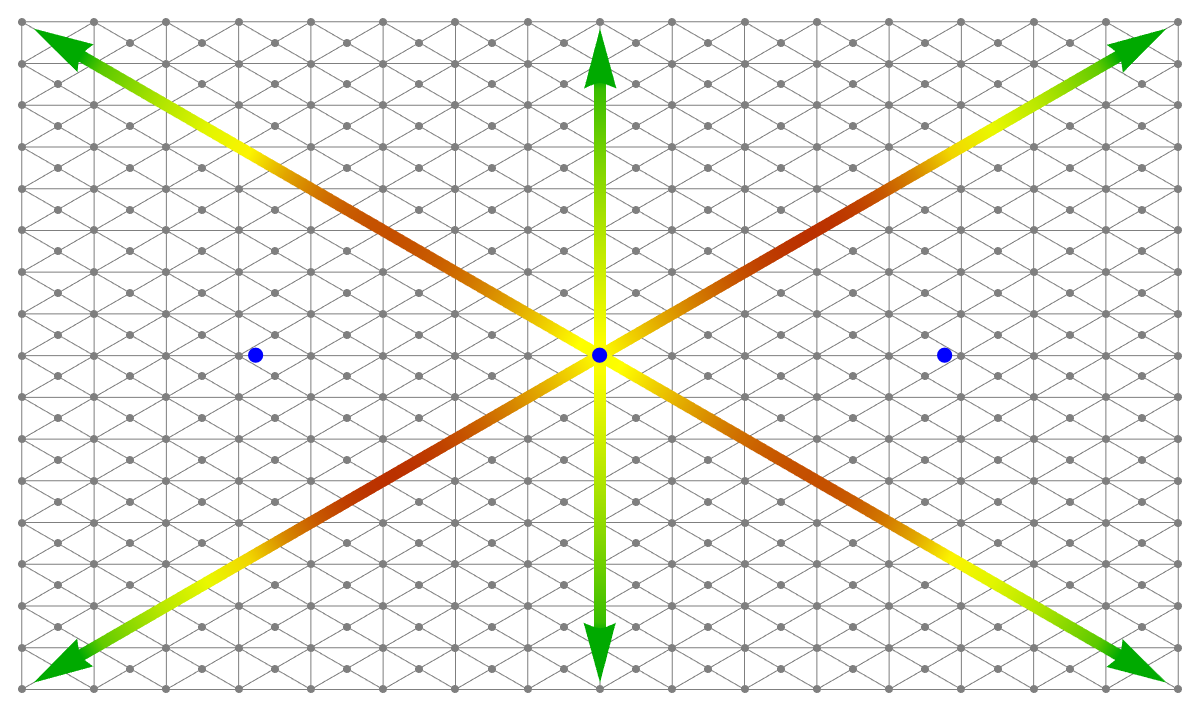}\\
	{\footnotesize a. $(G,\Gamma)$\\
	$\kappa \approx 2.2 \cdot 10^8$}
\end{center}
\end{minipage}
\hfil
\begin{minipage}{0.45\textwidth}
\begin{center}
	\includegraphics[width=\textwidth]{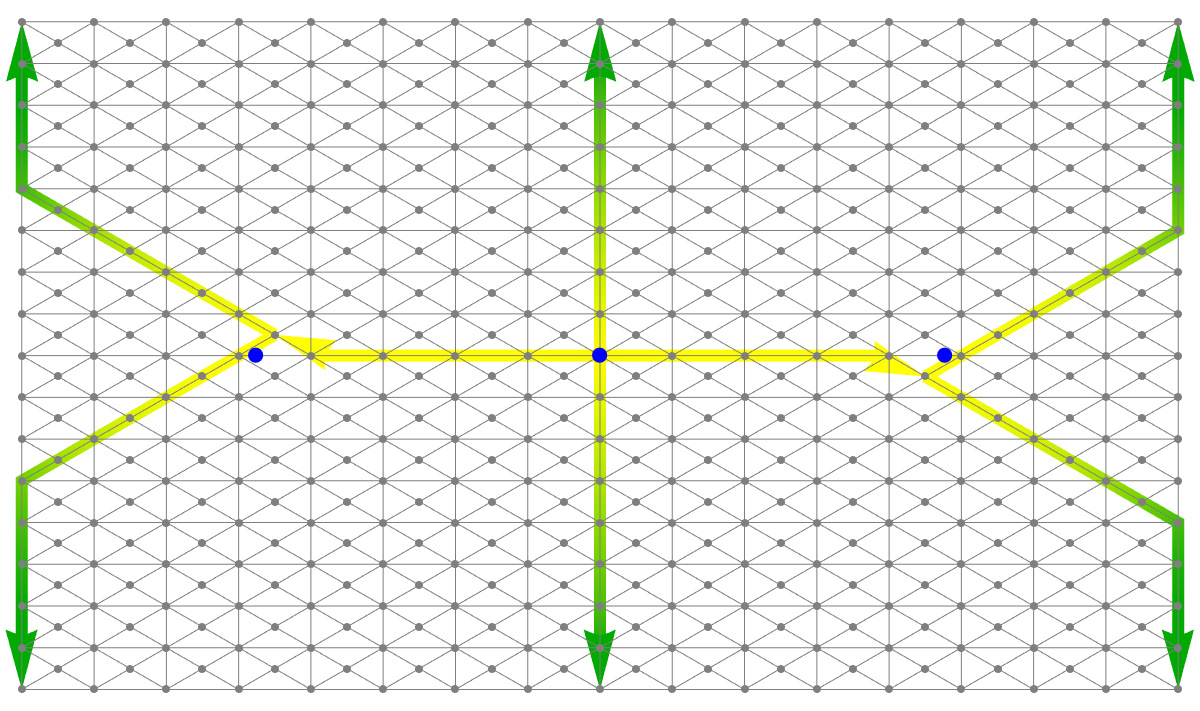}\\
	{\footnotesize b. $(\tilde G^{1},\tilde \Gamma^{1}) = \text{SimpleDeformation}(G,\Gamma)$\\
	 $\kappa \approx 360$ }
\end{center}
\end{minipage}
\\*[4mm]
\begin{minipage}{0.45\textwidth}
\begin{center}
	\includegraphics[width=\textwidth]{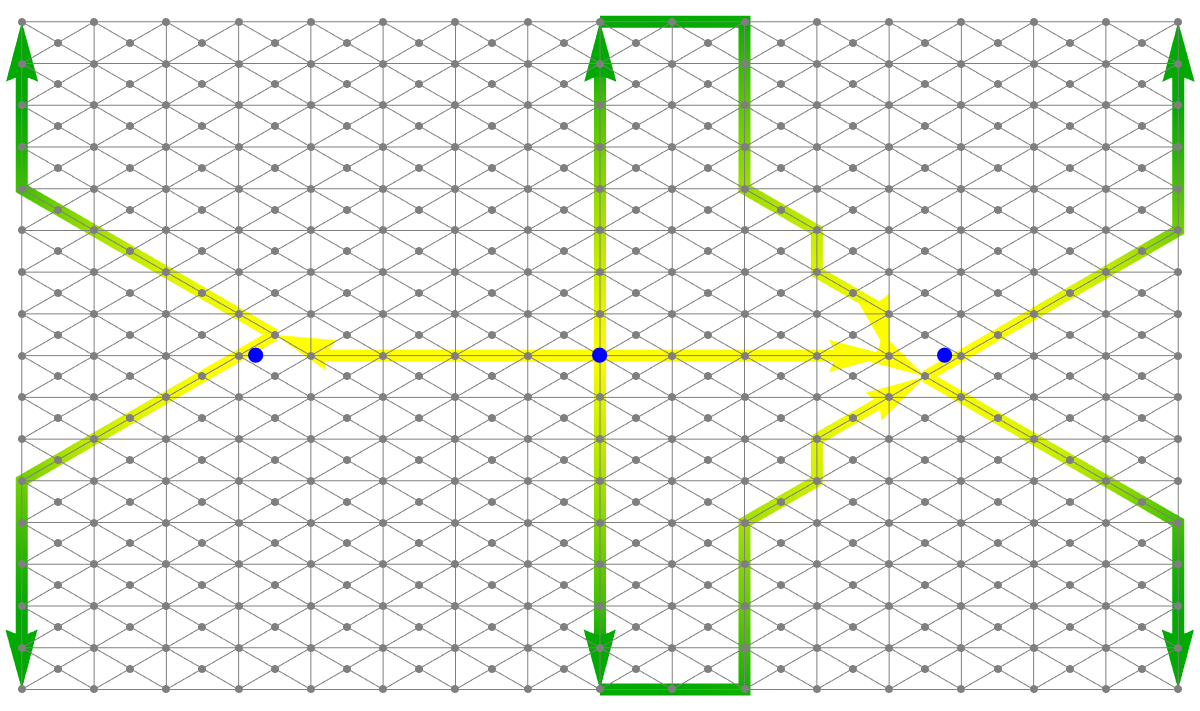}\\
	{\footnotesize c. $(\tilde G^{2},\tilde \Gamma^{2})= \text{LensingDeformation}(\tilde G^1,\tilde \Gamma^1)$\\
	 $\kappa \approx 250$}
\end{center}
\end{minipage}
\hfil
\begin{minipage}{0.45\textwidth}
\begin{center}
	\includegraphics[width=\textwidth]{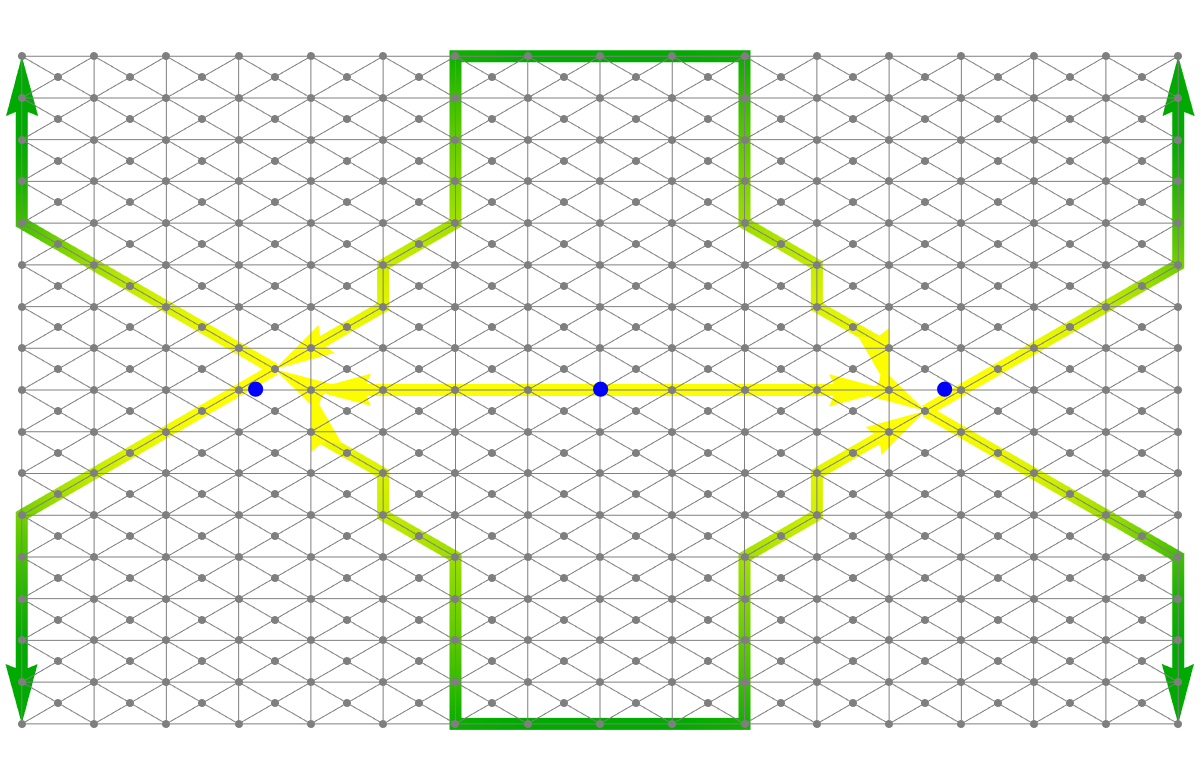}\\
	{\footnotesize d. $(\tilde G^{3},\tilde \Gamma^{3}) = \text{LensingDeformation}(\tilde G^2,\tilde \Gamma^2)$\\
 	$\kappa \approx 140$}
\end{center}
\end{minipage}

\caption{Application of \textsc{SimpleDeformation} and \textsc{LensingDeformation} of §\protect\ref{sec:algorithm} to the Painlevé II RHP with $s_1=1$, $s_2=2$ and $x=-10$. On the top left is the original contour of this RHP and the other contours are deformed versions of it. All contours have been calculated on a
$17 \times 17$ grid. The color encodes the magnitude of $\|G(z)-\I\|_F$, with green = $10^{-16}$, yellow = $1$, and 
red = $10^{4}$. The blue dots indicate the origin $z=0$ and the stationary points of the phase function $\theta$ at $z=\pm \sqrt{-x}/2$. The reduction of the condition number from (a) to (d) corresponds to an accuracy gain of about six digits.}
\label{fig:results}
\end{figure}

The basic idea is as follows: first, we restrict the problem to a bounded region of the complex plane and
embed the part of the contour $\Gamma$ belonging to that region as paths into a coarse, grid-like planar graph $g = (V,E)$
(see Fig.~\ref{fig:results}.a for an example of the Painlevé II RHP: because of a super exponential decay
as $z\to \infty$ along each of the rays, $G-I$ is already 
a computer zero outside the indicated rectangle). 

Second, for each $j$, the analytic continuation $G_j$ of the jump data on $\Gamma_j$
turns the graph $g$ into an edge-weighted graph $g_j$ by using the (edge) weights
\[
d_j(e) = \int_e d(G_j(z))\, d|z| \qquad (e \in E).
\]

Last, we replace $\Gamma_j$ (within the bounded domain) by the \emph{shortest} path (with the same endpoints as $\Gamma_j$)
with respect to $g_j$ subject to the following constraint: the thus deformed RHP must be equivalent to the original one.

It is this latter constraint which adds to the algorithmic difficulty of the problem: the $\Gamma_j$ cannot be optimized independent
of each other. We will address this problem by a greedy strategy: the largest contribution to the weight
constraints the admissible paths of the second largest one and so on; this will be accomplished by modifying the
underlying graphs $g_j$ in the corresponding order.  

Fig.~\ref{fig:results}.b shows the result of such an algorithmic deformation for the Painlevé II RHP ($s_1=1$, $s_2=2$, $x=-10$): the condition number is reduced by about six orders of magnitude. Further improvement is
possible by performing a ``lensing'' deformation, that is, by introducing multiple edges based on a factorization of $G$ 
(see §\ref{subsec:algoLensing}). The results of two such steps are shown in Fig.~\ref{fig:results}.c and d 
(more steps would not pay off). Though the improvement of the condition number is more modest in these two steps, it is instructive to compare the
algorithmic contour in Fig.~\ref{fig:results}.d with the manual construction of Olver and Trogdon \cite{1205.5604} shown in Fig.~\ref{fig:olver}. 

Fig.~\ref{fig:cond} compares, for varying values of $x$, the condition number of the original contour with that of the deformed contour optimized by the
greedy algorithm of §\ref{sec:algorithm}: a uniform stabilization by preconditioning is clearly visible.

\begin{figure}[tbp]
\begin{minipage}{0.9\textwidth}
\begin{center}
	\includegraphics[width=\textwidth]{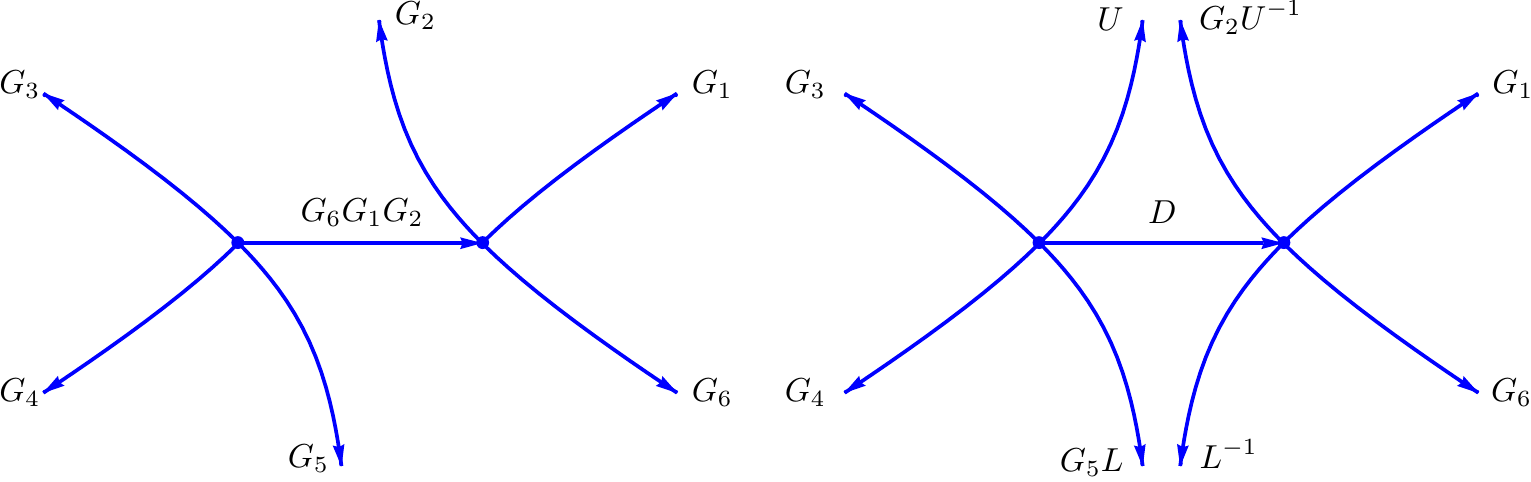}
\end{center}
\end{minipage}
\caption{Some manual constructions for the Painlevé II RHP taken from Olver and Trogdon \protect\cite[p.~20]{1205.5604}. Left: Deformation along the paths of steepest descent; right: deformation after lensing. The contours
bifurcate at the stationary points of the phase function $\theta$.}
\label{fig:olver}
\end{figure}


\subsection*{Outline of the paper}

In §\ref{sec:deform} we discuss the two admissible deformations of RHPs that will be considered in this paper:
simple deformations of contours and lensing deformations based on factorizations of the jump matrix $G$.
We address the question of how to match the topological constraints of such deformations in the planar graphs
attached to each part $\Gamma_j$ of the contour.
In §\ref{sec:algorithm} we give an in-depth description (with pseudo code) of the greedy algorithm that 
aims at optimizing these deformations. Important steps are illustrated for the Painlevé II RHP.
Further details of the implementation are discussed in §\ref{sec:implDetails}.

\section{Admissible Deformations of Riemann--Hilbert Problems}\label{sec:deform}
We briefly recall two of the deformations that can be applied to RHPs. A more detailed description can be found in \cite{fokas:2006:ptr}.

\subsection{Simple Deformations}
\begin{figure}[ht]
	\begin{tikzpicture}
	[rhcontour/.style={->,>=latex',thick},
	 solution/.style={black!60!white}]
	\coordinate (A) at (0,0);
	\node (D) at (4,2) {$\Gamma$};
	\coordinate (B) at ($(A)!0.2!(D)$);
	\coordinate (C) at ($(A)!0.7!(D)$);
	\path let \p1=(5,0) in
		coordinate (A') at ($(A)+(\p1)$)
		coordinate (B') at ($(B)+(\p1)$)
		coordinate (C') at ($(C)+(\p1)$)
		node (D') at ($(D)+(\p1)$) {$\tilde{\Gamma}$};

	\node[anchor=south east,minimum width=4cm] (L) at (D) {undeformed};
	%
	%

	\node[solution] (PP) at ($(A)!0.25!(D)!0.2!90:(D)$) {$\Phi$};
	\node[solution] (PM) at ($(A)!0.25!(D)!0.2!-90:(D)$) {$\Phi$};

	\draw[rhcontour] (A) -- (D) 
		node[midway,below,anchor=west,inner xsep=10] {$G$}
		node[very near start,above,anchor=south east] {$+$}
		node[very near start,below,anchor=north west] {$-$};
	\draw[rhcontour] (A) -- (B);
	\draw[rhcontour] (B) -- (C);
	\draw[rhcontour] (C) -- (D);


	%
	%
	\node[solution] (PP') at ($(A')!0.2!(D')!0.2!90:(D')$) {$\Phi$};
	\node[solution] (PM') at ($(A')!0.2!(D')!0.2!-90:(D')$) {$\Phi$};

	\node[anchor=south east,minimum width=4cm] (L) at (D') {deformed};

	\draw[fill=red,opacity=0.9] (B') .. controls ($(B')!1!-45:(C')$) and ($(C')!1!90:(B')$) .. (C') -- (B') node[midway,above,red,opacity=0.9] {$\Omega$};
	\draw[rhcontour] (A') -- (B');
	\draw[rhcontour] (C') -- (D')
	  node[midway,above,anchor=east,inner xsep=10] {$G$};
	\draw[rhcontour,out=0,in=-90] (B') .. controls ($(B')!1!-45:(C')$) and ($(C')!1!90:(B')$) .. (C');
	\draw[dotted,thick] (B') -- (C')
	  node[solution,black,midway,below,anchor=north west,inner xsep=10] {$\Phi G$};
\end{tikzpicture}
	\caption{A simple deformation.}
	\label{fig:steepestdescent}
\end{figure}
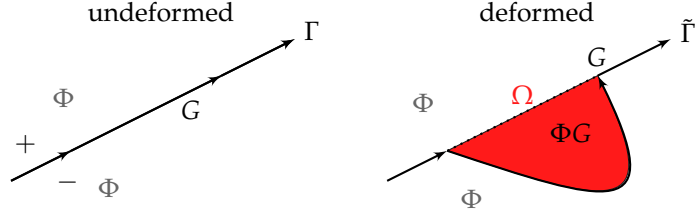

Fig.~\ref{fig:steepestdescent} shows an example of such a deformation. In general, simple deformations allow to continuously move a contour part in the complex plane (thereby covering a region $\Omega$) as long as the following conditions are satisfied:

\begin{itemize}
	\item[(i)] $\tilde\Gamma$ does not cross other parts of $\Gamma$,\\*[-3mm]
	\item[(ii)] $\Omega$ does not contain any other contour parts,\\*[-3mm]
	\item[(iii)] $G$ has a holomorphic continuation in $\Omega$.
\end{itemize}

\noindent
Then, the deformed RHP in Fig.~\ref{fig:steepestdescent} is solved by the function
\[
   \tilde\Phi = \left\{
     \begin{array}{lr}
       \Phi G & : x \in \Omega,\\*[2mm]
       \Phi & : x \notin \Omega.
     \end{array}
   \right.
\]
Conditions (i)-(iii) can be mapped to graph-constrained deformations as follows:

Condition (i) can be handled by {\em splitting} a graph as shown in Fig.~\ref{fig:graphsplit}:
if a path $p$ corresponding to a part of a contour is given, like the path highlighted in blue, 
we duplicate the vertices of $p$ and change all edges on the right side of $p$ so that they are 
connected to the newly created vertices but not to the vertices of $p$ itself. 
This way no path in the graph can cross $p$ anymore. We will use $g[p_1,p_2,\dots]$ to denote a graph $g$ which has been split in this fashion along the paths $p_1,p_2,\dots$. 

Condition (ii) is difficult to be built into the structure of a graph {\em a priori}, but it is easy to check for it
{\em a posteriori}: the circle composed by $\Gamma_i$ and $\tilde\Gamma_i$ should not enclose an endpoint of another arc. If violated, the algorithm simply stops (this never happened in our experiments; dealing with such a situation would
require to break the deformations into smaller pieces).

Condition (iii) can be handled by removing those regions from the graph where $G$ does not have a holomorphic continuation. 

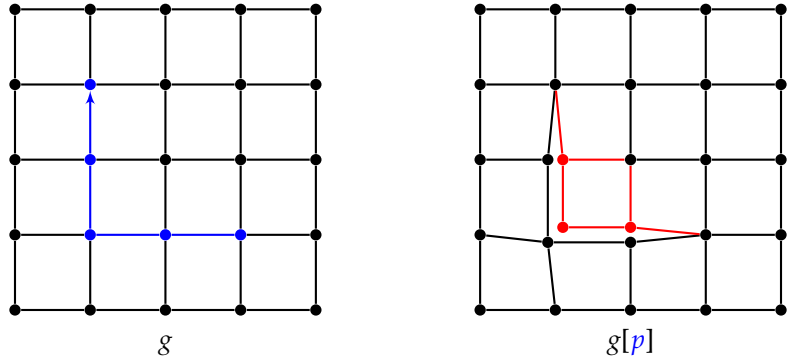
\begin{figure}[ht]
\begin{minipage}{0.45\textwidth}
\begin{center}
	\begin{tikzpicture}
[vertex/.style={circle,minimum size=1.5mm,inner sep=0pt,fill=black},
edge/.style={thick},
splitVertex/.style={fill=red},
splitLabel/.style={red},
highlightLabel/.style={blue},
defaultLabel/.style={black},
newEdge/.style={red},
changedEdge/.style={red},
end/.style={->,>=latex'},
highlightEdge/.style={blue},
highlightVertex/.style={blue},
]\node[vertex,] (1) at (1. , 1.) {}; 
\node[vertex,] (2) at (1. , 2.) {}; 
\node[vertex,] (3) at (1. , 3.) {}; 
\node[vertex,] (4) at (1. , 4.) {}; 
\node[vertex,] (5) at (1. , 5.) {}; 
\node[vertex,] (6) at (2. , 1.) {}; 
\node[vertex,highlightVertex,] (7) at (2. , 2.) {}; 
\node[vertex,highlightVertex,] (8) at (2. , 3.) {}; 
\node[vertex,highlightVertex,] (9) at (2. , 4.) {}; 
\node[vertex,] (10) at (2. , 5.) {}; 
\node[vertex,] (11) at (3. , 1.) {}; 
\node[vertex,highlightVertex,] (12) at (3. , 2.) {}; 
\node[vertex,] (13) at (3. , 3.) {}; 
\node[vertex,] (14) at (3. , 4.) {}; 
\node[vertex,] (15) at (3. , 5.) {}; 
\node[vertex,] (16) at (4. , 1.) {}; 
\node[vertex,highlightVertex,] (17) at (4. , 2.) {}; 
\node[vertex,] (18) at (4. , 3.) {}; 
\node[vertex,] (19) at (4. , 4.) {}; 
\node[vertex,] (20) at (4. , 5.) {}; 
\node[vertex,] (21) at (5. , 1.) {}; 
\node[vertex,] (22) at (5. , 2.) {}; 
\node[vertex,] (23) at (5. , 3.) {}; 
\node[vertex,] (24) at (5. , 4.) {}; 
\node[vertex,] (25) at (5. , 5.) {}; 
\draw[edge,](1) -- (2) ; 
\draw[edge,](1) -- (6) ; 
\draw[edge,](2) -- (3) ; 
\draw[edge,](2) -- (7) ; 
\draw[edge,](3) -- (4) ; 
\draw[edge,](3) -- (8) ; 
\draw[edge,](4) -- (5) ; 
\draw[edge,](4) -- (9) ; 
\draw[edge,](5) -- (10) ; 
\draw[edge,](6) -- (7) ; 
\draw[edge,](6) -- (11) ; 
\draw[edge,highlightEdge,](7) -- (8) ; 
\draw[edge,highlightEdge,](12) -- (7) ; 
\draw[edge,highlightEdge,end](8) -- (9) ; 
\draw[edge,](8) -- (13) ; 
\draw[edge,](9) -- (10) ; 
\draw[edge,](9) -- (14) ; 
\draw[edge,](10) -- (15) ; 
\draw[edge,](11) -- (12) ; 
\draw[edge,](11) -- (16) ; 
\draw[edge,](12) -- (13) ; 
\draw[edge,highlightEdge,](17) -- (12) ; 
\draw[edge,](13) -- (14) ; 
\draw[edge,](13) -- (18) ; 
\draw[edge,](14) -- (15) ; 
\draw[edge,](14) -- (19) ; 
\draw[edge,](15) -- (20) ; 
\draw[edge,](16) -- (17) ; 
\draw[edge,](16) -- (21) ; 
\draw[edge,](17) -- (18) ; 
\draw[edge,](17) -- (22) ; 
\draw[edge,](18) -- (19) ; 
\draw[edge,](18) -- (23) ; 
\draw[edge,](19) -- (20) ; 
\draw[edge,](19) -- (24) ; 
\draw[edge,](20) -- (25) ; 
\draw[edge,](21) -- (22) ; 
\draw[edge,](22) -- (23) ; 
\draw[edge,](23) -- (24) ; 
\draw[edge,](24) -- (25) ; 
\end{tikzpicture}\\
	{$g$\\}
\end{center}
\end{minipage}
\hfil
\begin{minipage}{0.45\textwidth}
\begin{center}
	\begin{tikzpicture}
[vertex/.style={circle,minimum size=1.5mm,inner sep=0pt,fill=black},
edge/.style={thick},
splitVertex/.style={fill=red},
splitLabel/.style={red},
highlightLabel/.style={blue},
defaultLabel/.style={black},
newEdge/.style={red},
changedEdge/.style={red},
end/.style={->,>=latex'},
highlightEdge/.style={blue},
highlightVertex/.style={blue},
]\node[vertex,] (1) at (1. , 1.) {}; 
\node[vertex,] (2) at (1. , 2.) {}; 
\node[vertex,] (3) at (1. , 3.) {}; 
\node[vertex,] (4) at (1. , 4.) {}; 
\node[vertex,] (5) at (1. , 5.) {}; 
\node[vertex,] (6) at (2. , 1.) {}; 
\node[vertex,] (7) at (1.9 , 1.9) {}; 
\node[vertex,] (8) at (1.9 , 3.) {}; 
\node[vertex,] (9) at (2. , 4.) {}; 
\node[vertex,] (10) at (2. , 5.) {}; 
\node[vertex,] (11) at (3. , 1.) {}; 
\node[vertex,] (12) at (3. , 1.9) {}; 
\node[vertex,] (13) at (3. , 3.) {}; 
\node[vertex,] (14) at (3. , 4.) {}; 
\node[vertex,] (15) at (3. , 5.) {}; 
\node[vertex,] (16) at (4. , 1.) {}; 
\node[vertex,] (17) at (4. , 2.) {}; 
\node[vertex,] (18) at (4. , 3.) {}; 
\node[vertex,] (19) at (4. , 4.) {}; 
\node[vertex,] (20) at (4. , 5.) {}; 
\node[vertex,] (21) at (5. , 1.) {}; 
\node[vertex,] (22) at (5. , 2.) {}; 
\node[vertex,] (23) at (5. , 3.) {}; 
\node[vertex,] (24) at (5. , 4.) {}; 
\node[vertex,] (25) at (5. , 5.) {}; 
\node[vertex,splitVertex,] (26) at (3. , 2.1) {}; 
\node[vertex,splitVertex,] (27) at (2.1 , 2.1) {}; 
\node[vertex,splitVertex,] (28) at (2.1 , 3.) {}; 
\draw[edge,](1) -- (2) ; 
\draw[edge,](1) -- (6) ; 
\draw[edge,](2) -- (3) ; 
\draw[edge,](2) -- (7) ; 
\draw[edge,](3) -- (4) ; 
\draw[edge,](3) -- (8) ; 
\draw[edge,](4) -- (5) ; 
\draw[edge,](4) -- (9) ; 
\draw[edge,](5) -- (10) ; 
\draw[edge,](6) -- (7) ; 
\draw[edge,](6) -- (11) ; 
\draw[edge,](7) -- (8) ; 
\draw[edge,](7) -- (12) ; 
\draw[edge,](8) -- (9) ; 
\draw[edge,](9) -- (10) ; 
\draw[edge,](9) -- (14) ; 
\draw[edge,changedEdge,](9) -- (28) ; 
\draw[edge,](10) -- (15) ; 
\draw[edge,](11) -- (12) ; 
\draw[edge,](11) -- (16) ; 
\draw[edge,](12) -- (17) ; 
\draw[edge,](13) -- (14) ; 
\draw[edge,](13) -- (18) ; 
\draw[edge,changedEdge,](13) -- (26) ; 
\draw[edge,changedEdge,](13) -- (28) ; 
\draw[edge,](14) -- (15) ; 
\draw[edge,](14) -- (19) ; 
\draw[edge,](15) -- (20) ; 
\draw[edge,](16) -- (17) ; 
\draw[edge,](16) -- (21) ; 
\draw[edge,](17) -- (18) ; 
\draw[edge,](17) -- (22) ; 
\draw[edge,changedEdge,](17) -- (26) ; 
\draw[edge,](18) -- (19) ; 
\draw[edge,](18) -- (23) ; 
\draw[edge,](19) -- (20) ; 
\draw[edge,](19) -- (24) ; 
\draw[edge,](20) -- (25) ; 
\draw[edge,](21) -- (22) ; 
\draw[edge,](22) -- (23) ; 
\draw[edge,](23) -- (24) ; 
\draw[edge,](24) -- (25) ; 
\draw[edge,newEdge,](26) -- (27) ; 
\draw[edge,newEdge,](27) -- (28) ; 
\end{tikzpicture}\\
	{$g$[\textcolor{blue}{$p$}]\\}
\end{center}
\end{minipage}
\caption{Illustration of split graphs. On the left side is the original graph $g$ with a path $p$ (highlighted in blue)
along which it is about to be split. The split graph $g[p]$ is on the right side, with all vertices and edges that
 have been changed or created highlighted in red. For a clear visualization the split in the graph has been enlarged
 by moving the vertex positions; in the actual graphs used by the algorithm the duplicated vertices stay at exactly the same position. All graphs are undirected, the arrow at the end of the blue path just indicates its orientation.}
\label{fig:graphsplit}
\end{figure}

\subsection{Multiple Deformations and Factorization: Lensing}
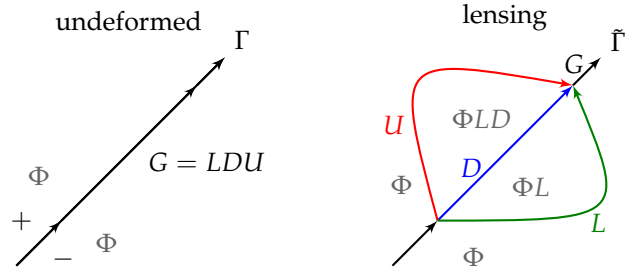
\begin{figure}[ht]
	\begin{tikzpicture}
	[rhcontour/.style={->,>=latex',thick},
	 solution/.style={black!60!white}]
	\coordinate (A) at (0,0);
	\node (D) at (3,3) {$\Gamma$};
	\coordinate (B) at ($(A)!0.2!(D)$);
	\coordinate (C) at ($(A)!0.8!(D)$);
	\path let \p1=(5,0) in
		coordinate (A') at ($(A)+(\p1)$)
		coordinate (B') at ($(B)+(\p1)$)
		coordinate (C') at ($(C)+(\p1)$)
		node (D') at ($(D)+(\p1)$) {$\tilde\Gamma$};
	%
	%
	\node[anchor=south east,minimum width=3cm] (L) at (D) {undeformed};
 
	\node[solution] (PP) at ($(A)!0.25!(D)!0.2!90:(D)$) {$\Phi$};
	\node[solution] (PM) at ($(A)!0.25!(D)!0.2!-90:(D)$) {$\Phi$};

	\draw[rhcontour] (A) -- (D) 
		node[midway,below,anchor=west,inner xsep=10] {$G = LDU$}
		node[very near start,above,anchor=south east] {$+$}
		node[very near start,below,anchor=north west] {$-$};
	\draw[rhcontour] (A) -- (B);
	\draw[rhcontour] (B) -- (C);
	\draw[rhcontour] (C) -- (D);

	%
	%
	
	\node[solution] (PP') at ($(A')!0.2!(D')!0.2!90:(D')$) {$\Phi$};
	\node[solution] (PM') at ($(A')!0.2!(D')!0.2!-90:(D')$) {$\Phi$};

	\node[anchor=south east,minimum width=3cm] (L) at (D') {lensing};

	\draw[rhcontour] (A') -- (B');
	\draw[rhcontour] (C') -- (D')
		node[near end,above,anchor=east] {$G$};
	\draw[rhcontour,blue] (B') -- (C')
		node[near start,above] {$D$}
		node[solution,near end,above,anchor=east,inner xsep=10] {$\Phi LD$}
		node[solution,near start,below,anchor=west,inner xsep=15] {$\Phi L$};
	\draw[rhcontour,out=90,in=180,red] (B') .. controls ($(B')!0.9!60:(C')$) .. (C')
		node[near start,above,anchor=east] {$U$};
	\draw[rhcontour,out=0,in=-90,green!50!black] (B') .. controls ($(B')!1!-45:(C')$) .. (C')
		node[midway,below] {$L$};

\end{tikzpicture}
	\caption{A lensing deformation.}
	\label{fig:lensing}
\end{figure}
Fig.~\ref{fig:lensing} shows an example of such a deformation. To initialize, several copies of a contour part are created at one and the same location, where each
copy corresponds to a factor of a given multiplicative decomposition of the jump matrix $G$. We
call these copies the \emph{factors} of this part of the contour $\Gamma$. These factors are then moved around in the complex plane 
subject to conditions (i)-(iii) and, additionally, the following condition:

\begin{itemize}
	\item[(iv)] the mutual orientation of the factors must be preserved.
\end{itemize}

For example, in Fig.~\ref{fig:lensing} the order of the decomposition $G = LDU$ requires that the factor $U$ is to the left of 
the factor $D$ and that the factor $D$ is to the left of the factor $L$. To preserve this orientation in our deformation algorithm, we calculate the shortest path for just one of the factors. For the other factors we use a 
modification of the shortest enclosing circle algorithm of Provan \cite{provan89}, see §\ref{subsec:algoLensing}.

\section{The Greedy Algorithm}\label{sec:algorithm}

\subsection{Notation}
\begin{itemize}
	\item $d_e$ : weight of the edge $e$\\*[-2mm]
	\item $P_1 \join P_2$ : path $P_1$ joined with path $P_2$\\*[-2mm]
	\item $\overleftarrow{P}$ : reversed path $P$\\*[-2mm]
	\item $P[u,v]$ : subpath from vertex $u$ to vertex $v$ within the path $P$\\*[-2mm]
	\item $\spa(g,u,v)$ : shortest path from vertex $u$ to $v$ in the weighted graph $g$\\*[-2mm]
	\item $\interior(W)$ : homological interior of a closed walk $W$, that is, all vertices $v$ of the graph that have winding number $\ind(W,v) = \pm 1$ w.r.t. $W$\\*[-2mm]
	\item $p_- / p_+$ : path on the left/right side of the split along $p$ in $g[p]$ (see Fig.~\ref{fig:graphsplit})
\end{itemize}

\subsection{Optimized Simple Deformations}\label{subsec:algoSteepestDescent}
\begin{algorithm}[ht]
	\caption{Optimized Simple Deformation}\label{algo:optPath}
	\begin{algorithmic}[1]
		\Procedure{SimpleDeformation}{$G,\Gamma$}
		\State $n = |\Gamma|$ \Comment{$n$ is the number of contour parts}
		\State $P = ()$ 			\Comment{fixed new paths}
		\State $p = ()$ 			\Comment{candidates for new paths}
		\State $F = ()$ 			\Comment{already processed contour parts}
		\State $Q = (1,\dots,n)$ 		\Comment{unprocessed contour parts}
		\State $g = ()$          		\Comment{graphs corresponding to contour parts}
		\State $g^\star = ()$    		\Comment{initial not split graphs}
		\ForAll{$i \in Q$}
		\State $g_i$ = graph with edge weights $d_e = \| G_i - I \|_{L^1(e)}$  \label{line:graphcreation}
		\State $v^l_i$ = vertex in $g_i$ nearest to left endpoint of $\Gamma_i$ \label{line:endpoint}
		\State $v^r_i$ = vertex in $g_i$ nearest to right endpoint of $\Gamma_i$
		\EndFor
		\State $g^\star = g$
		\While{$Q \ne  ()$}
			\ForAll{$i \in Q$}
				\State $p_i = \text{sp}(g_i,v^l_i,v^r_i)$ \Comment{shortest path from $v^l_i$ to $v^r_i$ in $g_i$} \label{line:shortestpaths}
			\EndFor
			\State $i_{\star} = \underset{i \in Q}{\argmax}\,d(p_i)$ \label{line:selectPath}
			\State $P_{i_\star} = p_{i_\star}$
			\State $Q = Q \backslash (i_\star)$
			\ForAll{$i \in F$}
			\Comment{try to improve the path if there is an intersection}
				\If{$P_{i_\star} \cap P_i \ne \emptyset $}
					\State $P = \text{ImproveSharedSubpath}(g^\star,G,P,i,i_\star)$ \label{line:improveCommonSubPath}
				\EndIf
			\EndFor
			\State $F = F \cup (i_\star)$
			\ForAll{$i \in Q$}
				\State $g_i = g^\star_i[P]$ \label{line:splitGraphs}
			\EndFor
		\EndWhile
		\State $(\tilde G,\tilde\Gamma) = \text{MapToRHP}(G,P)$ \Comment{create new contour parts} \label{line:mapToRHP}
		\State \Return{$(\tilde G, \tilde \Gamma)$}
		\EndProcedure
	\end{algorithmic}
\end{algorithm}

\newcommand{\sixpack}[6]{
	\begin{center}
	\begin{minipage}{0.32\textwidth}
		\begin{center}
		\includegraphics[width=\textwidth]{#1}
		\end{center}
	\end{minipage}
	\hfil
	\begin{minipage}{0.32\textwidth}
		\begin{center}
		\includegraphics[width=\textwidth]{#2}
		\end{center}
	\end{minipage}
	\hfil
	\begin{minipage}{0.32\textwidth}
		\begin{center}
		\includegraphics[width=\textwidth]{#3}
		\end{center}
	\end{minipage}\\
	\end{center}
	\vspace{0.1cm}
	\begin{center}
	\begin{minipage}{0.32\textwidth}
		\begin{center}
		\includegraphics[width=\textwidth]{#4}
		\end{center}
	\end{minipage}
	\hfil
	\begin{minipage}{0.32\textwidth}
		\begin{center}
		\includegraphics[width=\textwidth]{#5}
		\end{center}
	\end{minipage}
	\hfil
	\begin{minipage}{0.32\textwidth}
		\begin{center}
		\includegraphics[width=\textwidth]{#6}
		\end{center}
	\end{minipage}\\
	\end{center}
}
\newcommand{\sixpackSimple}[1]{\sixpack{#1-3}{#1-2}{#1-1}{#1-4}{#1-5}{#1-6}}

The idea of Algorithm~\ref{algo:optPath} goes as follows: first (lines 9--13), for each of the contour parts
$\Gamma_j$ and the corresponding jump matrices $G_j$ (which are assumed to have a holomorphic continuation
to the rectangular region supporting the grid), a separate weighted graph $g_j$ with edge weights 
\[
d_e = \int_e d(G_j(z))\, d|z|
\]
\begin{figure}[tbp]
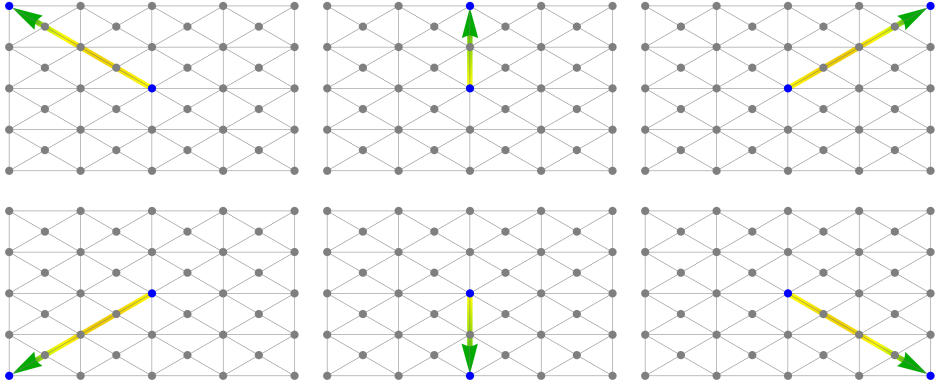

	\sixpackSimple{jumpScalePII--10-domaingraph-mapped}
	\caption{Create separate weighted graphs for each contour part $\Gamma_j$ with weights depending on the
	jump matrix $G_j$ (line \ref{line:graphcreation} in Algorithm~\ref{algo:optPath}).}\label{fig:separate}
\end{figure}%
\begin{figure}[tbp]
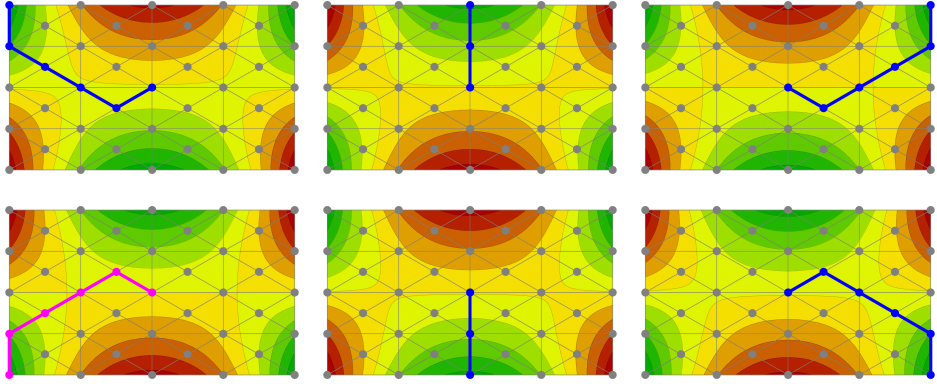

	\sixpackSimple{jumpScalePII--10-domaingraph-sp}
	\caption{Calculate the shortest paths for each $G_j$ separately, highlighted in blue (line \ref{line:shortestpaths}
	in Algorithm~\ref{algo:optPath}). The one with the largest total weight is shown in magenta (line \ref{line:selectPath}). The color encodes the magnitude of $\|G_j(z)-\I\|_F$, with green = $10^{-16}$, yellow = $1$, and red=$10^{16}$.}\label{fig:firstStep}
\end{figure}%
is created, see Fig.~\ref{fig:separate}. Second (lines 16--18), each $\Gamma_j$ is replaced by a shortest path
that shares the same endpoints, see Fig.~\ref{fig:firstStep}. The thus separately optimized paths, however, will in general not satisfy condition (i) of §\ref{sec:deform}, that is,
they will cross each other. Therefore, some of the paths have to be modified to match this condition, which increases
the corresponding weight. By keeping, third (lines 19--20), the path $P$ of dominant total weight fixed, we restrict such
modifications to the other parts that contribute less to the condition number. By splitting, fourth (lines 28--29),
all graphs along $P$ and repeating (lines 16--18) the calculation of the shortest paths in the split graphs,
we come up with paths that do not cross $P$, see Fig.~\ref{fig:secondStep}. 

\begin{figure}[tbp]
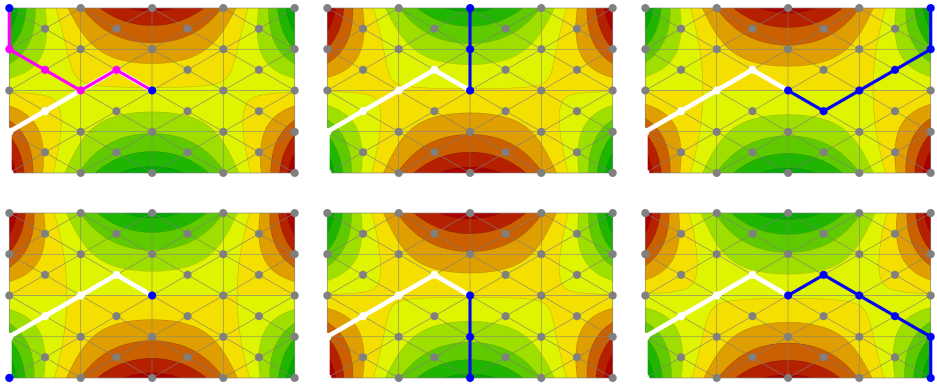

	\sixpackSimple{jumpScalePII--10-domaingraph-sp2}
	\caption{Recalculate the shortest paths (line \ref{line:shortestpaths} in Algorithm~\ref{algo:optPath}) in the
	graphs split along the optimal path of largest weight from Fig.~\ref{fig:firstStep}, here shown in white. The new
	shortest paths are highlighted in blue, the one of maximal weight in magenta.}\label{fig:secondStep}
\end{figure}

\begin{figure}[tbp]
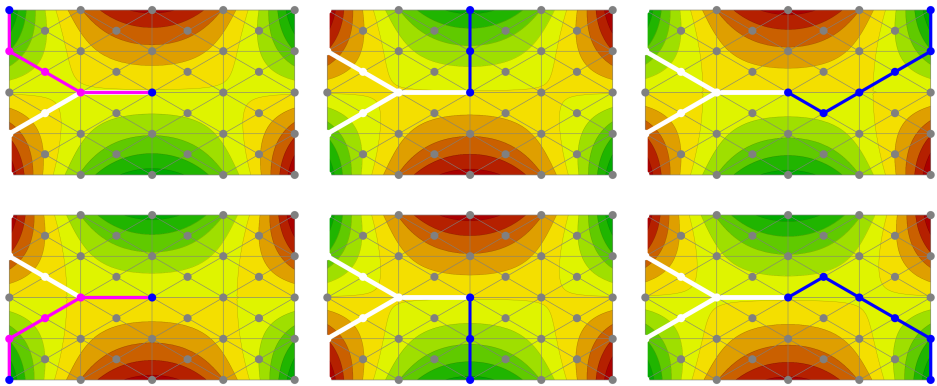

	\sixpackSimple{jumpScalePII--10-domaingraph-split2-improved}
	\caption{Improvement of the subpath shared by the fixed white path and the magenta path of Fig.~\ref{fig:secondStep}
	(Algorithm~\ref{algo:improvePath}). After this improvement, both paths are fixed and the graphs are split along their union (the white Y-shape contour). From here it should be clear, how Algorithm~\ref{algo:optPath}
	arrives (for a finer grid) at the deformed contours shown in Fig.~\ref{fig:results}.b.}\label{fig:thirdStep}
\end{figure}

This procedure is then repeated, fifth (line 15), until all paths are fixed and, hence, non-crossing. (In each round of this loop another path  gets fixed.) Finally, sixth (line \ref{line:mapToRHP}),
the algorithm constructs the deformed contour data from the just calculated set of paths. 
For paths and subpaths that do not share an edge with another path we simply use the path and the corresponding jump matrix as new contour data. Subpaths which occur in more than one path will be mapped to new contour data
by performing an ``inverse lensing'': the new jump matrix is calculated as the (properly ordered) product of all the jump matrices sharing the that subpath. For example, if the paths $P_i$ and $P_j$ have a common subpath $s$ and $P_i$ is to the left of $P_j$, the procedure {\sc MapToRHP} creates a new contour part $s$ with the jump matrix $G_j G_i$.

In a situation as shown in Fig.~\ref{fig:secondStep}, where the new optimal paths share a subpath with
the already fixed ones, further improvement is possible (lines 23--25) by optimizing the shared subpath with respect
to the weight obtained from combining the corresponding jump matrices (that is, the just mentioned ``inverse lensing''). This procedure (Algorithm~\ref{algo:improvePath}) is schematically illustrated in Fig.~\ref{fig:recursiveImprove}; the application of this procedure to the example of Fig.~\ref{fig:secondStep} is shown in Fig.~\ref{fig:thirdStep}.

\begin{algorithm}[ht]
	\caption{Improve Paths with a Shared Subpath}\label{algo:improvePath}
	\begin{algorithmic}[1]
		\Procedure{ImproveSharedSubpath}{$g,G,P,i_1,i_2$}
			\State $g' = g_{i_1}$ 		\Comment{create a temporary graph}
			\If{$P_{i_1} \text{ left of } P_{i_2}$}
				\State set weights for edges $e$ of $g'$ to $d(e) = \| G_{i_2} G_{i_1}  - I \|_{L^1(e)}$
			\EndIf
			\If{$P_{i_1} \text{ right of } P_{i_2}$}
				\State set weights for edges $e$ of $g'$ to $d(e) = \| G_{i_1} G_{i_2}  - I \|_{L^1(e)}$
			\EndIf
			\State $s = P_{i_1} \cap P_{i_2}$     \Comment{split $P_{i_1}$ and $P_{i_2}$ into left,common and right subpath} \label{line:restrictionsStart}
			\State $(P^l_{i_1}, s, P^r_{i_1}) = P_{i_1}$
			\State $(P^l_{i_2}, s, P^r_{i_2}) = P_{i_2}$
			\State $P' = P \backslash \{P_{i_1},P_{i_2}\}$
			\State $g' = g'[P',P^l_{i_1} \join \overleftarrow{P^l_{i_2}}, P^r_{i_1} \join \overleftarrow{P^r_{i_2}}]$ \label{line:restrictionsEnd}
			\State $p_\star = sp(g',s_1,s_{-1})$  \label{line:shortestPath} \Comment{$p_\star =$ shortest path between start and end vertex of s}
			\State $P_{i_1} = P^l_{i_1} \cup p_\star \cup P^r_{i_1}$
			\State $P_{i_2} = P^l_{i_2} \cup p_\star \cup P^r_{i_2}$
			\If{$\text{containsCircle}(P_{i_1})$} \label{line:recursionStart}	
				\State $P_{i_1} = \text{dropCircle}(P_{i_1})$
				\State $s = P_{i_1} \cap P_{i_2}$
				\State $P_{i_2} = \text{sp}(g_{i_2}[ P \backslash P_{i_2}],(P_{i_2})_1,(P_{i_2})_{-1})$
				\If{ $P_{1_1} \cap P_{i_2} \ne s$}
					\State $P = \text{ImproveSharedSubpath}(g,G,P,i_1,i_2)$
				\EndIf
			\ElsIf{$\text{containsCircle}(P_{i_2})$}
				\State $P_{i_2} = \text{dropCircle}(P_{i_2})$
				\State $s = P_{i_1} \cap P_{i_2}$
				\State $P_{i_1} = \text{sp}(g_{i_1}[ P \backslash P_{i_1}],(P_{i_1})_1,(P_{i_1})_{-1})$
				\If{ $P_{1_1} \cap P_{i_2} \ne s$}
					\State $P = \text{ImproveSharedSubpath}(g,G,P,i_1,i_2)$
				\EndIf
			\EndIf 			      \label{line:recursionEnd}
			\State \Return{$P$}
		\EndProcedure
	\end{algorithmic}
\end{algorithm}

\begin{figure}[tbp]
	\begin{minipage}{0.45\textwidth}
	\begin{center}
		\begin{tikzpicture}
[vertex/.style={circle,minimum size=1.5mm,inner sep=0pt,fill=black},
edge/.style={thick},
splitVertex/.style={fill=red},
splitLabel/.style={red},
highlightLabel/.style={blue},
defaultLabel/.style={black},
newEdge/.style={red},
changedEdge/.style={red},
leftEnd/.style={<-,>=latex'},
rightEnd/.style={->,>=latex'},
hightlightPath/.style={blue},
path1/.style={blue,line width=1mm},
path2/.style={green,line width=0.4mm}]
\node[vertex,label={[defaultLabel]below left:1}] (1) at (1. , 1.) {}; 
\node[vertex,label={[defaultLabel]below left:2}] (2) at (1. , 2.) {}; 
\node[vertex,label={[defaultLabel]below left:3}] (3) at (1. , 3.) {}; 
\node[vertex,label={[defaultLabel]below left:4}] (4) at (1. , 4.) {}; 
\node[vertex,label={[defaultLabel]below left:5}] (5) at (2. , 1.) {}; 
\node[vertex,label={[defaultLabel]below left:6}] (6) at (2. , 2.) {}; 
\node[vertex,label={[defaultLabel]below left:7}] (7) at (2. , 3.) {}; 
\node[vertex,label={[defaultLabel]below left:8}] (8) at (2. , 4.) {}; 
\node[vertex,label={[defaultLabel]below left:9}] (9) at (3. , 1.) {}; 
\node[vertex,label={[defaultLabel]below left:10}] (10) at (3. , 2.) {}; 
\node[vertex,label={[defaultLabel]below left:11}] (11) at (3. , 3.) {}; 
\node[vertex,label={[defaultLabel]below left:12}] (12) at (3. , 4.) {}; 
\node[vertex,label={[defaultLabel]below left:13}] (13) at (4. , 1.) {}; 
\node[vertex,label={[defaultLabel]below left:14}] (14) at (4. , 2.) {}; 
\node[vertex,label={[defaultLabel]below left:15}] (15) at (4. , 3.) {}; 
\node[vertex,label={[defaultLabel]below left:16}] (16) at (4. , 4.) {}; 
\node[vertex,label={[defaultLabel]below left:17}] (17) at (5. , 1.) {}; 
\node[vertex,label={[defaultLabel]below left:18}] (18) at (5. , 2.) {}; 
\node[vertex,label={[defaultLabel]below left:19}] (19) at (5. , 3.) {}; 
\node[vertex,label={[defaultLabel]below left:20}] (20) at (5. , 4.) {}; 
\draw[edge,](1) -- (2); 
\draw[edge,](1) -- (5); 
\draw[edge,](2) -- (3); 
\draw[edge,](2) -- (6); 
\draw[edge,](3) -- (4); 
\draw[edge,](3) -- (7); 
\draw[edge,](4) -- (8); 
\draw[edge,](5) -- (6); 
\draw[edge,](5) -- (9); 
\draw[edge,](6) -- (7); 
\draw[edge,](6) -- (10); 
\draw[edge,](7) -- (8); 
\draw[edge,](7) -- (11); 
\draw[edge,](8) -- (12); 
\draw[edge,](9) -- (10); 
\draw[edge,](9) -- (13); 
\draw[edge,](10) -- (11); 
\draw[edge,](10) -- (14); 
\draw[edge,](11) -- (12); 
\draw[edge,](11) -- (15); 
\draw[edge,](12) -- (16); 
\draw[edge,](13) -- (14); 
\draw[edge,](13) -- (17); 
\draw[edge,](14) -- (15); 
\draw[edge,](14) -- (18); 
\draw[edge,](15) -- (16); 
\draw[edge,](15) -- (19); 
\draw[edge,](16) -- (20); 
\draw[edge,](17) -- (18); 
\draw[edge,](18) -- (19); 
\draw[edge,](19) -- (20); 

\draw[edge,path1] (1) -- (5);
\draw[edge,path1] (5) -- (6);
\draw[edge,path1] (6) -- (7);
\draw[edge,path1] (7) -- (11);
\draw[edge,path1] (11) -- (15);

\draw[edge,path2] (3) -- (7);
\draw[edge,path2] (7) -- (11);
\draw[edge,path2] (11) -- (15);
\end{tikzpicture}
	\end{center}
	\end{minipage}
	\hfil
	\begin{minipage}{0.45\textwidth}
	\begin{center}
		\begin{tikzpicture}
[vertex/.style={circle,minimum size=1.5mm,inner sep=0pt,fill=black},
edge/.style={thick},
splitVertex/.style={fill=red},
splitLabel/.style={red},
highlightLabel/.style={blue},
defaultLabel/.style={black},
newEdge/.style={red},
changedEdge/.style={red},
leftEnd/.style={<-,>=latex'},
rightEnd/.style={->,>=latex'},
hightlightPath/.style={blue},
path1/.style={blue,line width=1mm},
path2/.style={green,line width=0.4mm}]
\node[vertex,label={[defaultLabel]below left:1}] (1) at (1. , 1.) {}; 
\node[vertex,label={[defaultLabel]below left:2}] (2) at (1. , 2.) {}; 
\node[vertex,label={[defaultLabel]below left:3}] (3) at (1. , 3.) {}; 
\node[vertex,label={[defaultLabel]below left:4}] (4) at (1. , 4.) {}; 
\node[vertex,label={[defaultLabel]below left:5}] (5) at (2. , 1.) {}; 
\node[vertex,label={[defaultLabel]below left:6}] (6) at (2. , 2.) {}; 
\node[vertex,label={[defaultLabel]below left:7}] (7) at (2. , 3.) {}; 
\node[vertex,label={[defaultLabel]below left:8}] (8) at (2. , 4.) {}; 
\node[vertex,label={[defaultLabel]below left:9}] (9) at (3. , 1.) {}; 
\node[vertex,label={[defaultLabel]below left:10}] (10) at (3. , 2.) {}; 
\node[vertex,label={[defaultLabel]below left:11}] (11) at (3. , 3.) {}; 
\node[vertex,label={[defaultLabel]below left:12}] (12) at (3. , 4.) {}; 
\node[vertex,label={[defaultLabel]below left:13}] (13) at (4. , 1.) {}; 
\node[vertex,label={[defaultLabel]below left:14}] (14) at (4. , 2.) {}; 
\node[vertex,label={[defaultLabel]below left:15}] (15) at (4. , 3.) {}; 
\node[vertex,label={[defaultLabel]below left:16}] (16) at (4. , 4.) {}; 
\node[vertex,label={[defaultLabel]below left:17}] (17) at (5. , 1.) {}; 
\node[vertex,label={[defaultLabel]below left:18}] (18) at (5. , 2.) {}; 
\node[vertex,label={[defaultLabel]below left:19}] (19) at (5. , 3.) {}; 
\node[vertex,label={[defaultLabel]below left:20}] (20) at (5. , 4.) {}; 
\draw[edge,](1) -- (2); 
\draw[edge,](1) -- (5); 
\draw[edge,](2) -- (3); 
\draw[edge,](2) -- (6); 
\draw[edge,](3) -- (4); 
\draw[edge,](3) -- (7); 
\draw[edge,](4) -- (8); 
\draw[edge,](5) -- (6); 
\draw[edge,](5) -- (9); 
\draw[edge,](6) -- (7); 
\draw[edge,](6) -- (10); 
\draw[edge,](7) -- (8); 
\draw[edge,](7) -- (11); 
\draw[edge,](8) -- (12); 
\draw[edge,](9) -- (10); 
\draw[edge,](9) -- (13); 
\draw[edge,](10) -- (11); 
\draw[edge,](10) -- (14); 
\draw[edge,](11) -- (12); 
\draw[edge,](11) -- (15); 
\draw[edge,](12) -- (16); 
\draw[edge,](13) -- (14); 
\draw[edge,](13) -- (17); 
\draw[edge,](14) -- (15); 
\draw[edge,](14) -- (18); 
\draw[edge,](15) -- (16); 
\draw[edge,](15) -- (19); 
\draw[edge,](16) -- (20); 
\draw[edge,](17) -- (18); 
\draw[edge,](18) -- (19); 
\draw[edge,](19) -- (20); 

\draw[edge,path1] (1) -- (5);
\draw[edge,path1] (5) -- (6);
\draw[edge,path1] (6) -- (7);
\draw[edge,path1] (6) -- (10);
\draw[edge,path1] (10) -- (14);
\draw[edge,path1] (14) -- (15);

\draw[edge,path2] (3) -- (7);
\draw[edge,path2] (7) -- (6);
\draw[edge,path2] (6) -- (10);
\draw[edge,path2] (10) -- (14);
\draw[edge,path2] (14) -- (15);
\end{tikzpicture}
	\end{center}
	\end{minipage}\hfil
	\\[4mm]
	\begin{minipage}{0.45\textwidth}
	\begin{center}
		\begin{tikzpicture}
[vertex/.style={circle,minimum size=1.5mm,inner sep=0pt,fill=black},
edge/.style={thick},
splitVertex/.style={fill=red},
splitLabel/.style={red},
highlightLabel/.style={blue},
defaultLabel/.style={black},
newEdge/.style={red},
changedEdge/.style={red},
leftEnd/.style={<-,>=latex'},
rightEnd/.style={->,>=latex'},
hightlightPath/.style={blue},
path1/.style={blue,line width=1mm},
path2/.style={green,line width=0.4mm}]
\node[vertex,label={[defaultLabel]below left:1}] (1) at (1. , 1.) {}; 
\node[vertex,label={[defaultLabel]below left:2}] (2) at (1. , 2.) {}; 
\node[vertex,label={[defaultLabel]below left:3}] (3) at (1. , 3.) {}; 
\node[vertex,label={[defaultLabel]below left:4}] (4) at (1. , 4.) {}; 
\node[vertex,label={[defaultLabel]below left:5}] (5) at (2. , 1.) {}; 
\node[vertex,label={[defaultLabel]below left:6}] (6) at (2. , 2.) {}; 
\node[vertex,label={[defaultLabel]below left:7}] (7) at (2. , 3.) {}; 
\node[vertex,label={[defaultLabel]below left:8}] (8) at (2. , 4.) {}; 
\node[vertex,label={[defaultLabel]below left:9}] (9) at (3. , 1.) {}; 
\node[vertex,label={[defaultLabel]below left:10}] (10) at (3. , 2.) {}; 
\node[vertex,label={[defaultLabel]below left:11}] (11) at (3. , 3.) {}; 
\node[vertex,label={[defaultLabel]below left:12}] (12) at (3. , 4.) {}; 
\node[vertex,label={[defaultLabel]below left:13}] (13) at (4. , 1.) {}; 
\node[vertex,label={[defaultLabel]below left:14}] (14) at (4. , 2.) {}; 
\node[vertex,label={[defaultLabel]below left:15}] (15) at (4. , 3.) {}; 
\node[vertex,label={[defaultLabel]below left:16}] (16) at (4. , 4.) {}; 
\node[vertex,label={[defaultLabel]below left:17}] (17) at (5. , 1.) {}; 
\node[vertex,label={[defaultLabel]below left:18}] (18) at (5. , 2.) {}; 
\node[vertex,label={[defaultLabel]below left:19}] (19) at (5. , 3.) {}; 
\node[vertex,label={[defaultLabel]below left:20}] (20) at (5. , 4.) {}; 
\draw[edge,](1) -- (2); 
\draw[edge,](1) -- (5); 
\draw[edge,](2) -- (3); 
\draw[edge,](2) -- (6); 
\draw[edge,](3) -- (4); 
\draw[edge,](3) -- (7); 
\draw[edge,](4) -- (8); 
\draw[edge,](5) -- (6); 
\draw[edge,](5) -- (9); 
\draw[edge,](6) -- (7); 
\draw[edge,](6) -- (10); 
\draw[edge,](7) -- (8); 
\draw[edge,](7) -- (11); 
\draw[edge,](8) -- (12); 
\draw[edge,](9) -- (10); 
\draw[edge,](9) -- (13); 
\draw[edge,](10) -- (11); 
\draw[edge,](10) -- (14); 
\draw[edge,](11) -- (12); 
\draw[edge,](11) -- (15); 
\draw[edge,](12) -- (16); 
\draw[edge,](13) -- (14); 
\draw[edge,](13) -- (17); 
\draw[edge,](14) -- (15); 
\draw[edge,](14) -- (18); 
\draw[edge,](15) -- (16); 
\draw[edge,](15) -- (19); 
\draw[edge,](16) -- (20); 
\draw[edge,](17) -- (18); 
\draw[edge,](18) -- (19); 
\draw[edge,](19) -- (20); 

\draw[edge,path1] (1) -- (5);
\draw[edge,path1] (5) -- (6);
\draw[edge,path1] (6) -- (10);
\draw[edge,path1] (10) -- (14);
\draw[edge,path1] (14) -- (15);

\draw[edge,path2] (3) -- (7);
\draw[edge,path2] (7) -- (6);
\draw[edge,path2] (6) -- (10);
\draw[edge,path2] (10) -- (14);
\draw[edge,path2] (14) -- (15);
\end{tikzpicture}
	\end{center}
	\end{minipage}
	\hfil
	\begin{minipage}{0.45\textwidth}
	\begin{center}
		\begin{tikzpicture}
[vertex/.style={circle,minimum size=1.5mm,inner sep=0pt,fill=black},
edge/.style={thick},
splitVertex/.style={fill=red},
splitLabel/.style={red},
highlightLabel/.style={blue},
defaultLabel/.style={black},
newEdge/.style={red},
changedEdge/.style={red},
leftEnd/.style={<-,>=latex'},
rightEnd/.style={->,>=latex'},
hightlightPath/.style={blue},
path1/.style={blue,line width=1mm},
path2/.style={green,line width=0.4mm}]
\node[vertex,label={[defaultLabel]below left:1}] (1) at (1. , 1.) {}; 
\node[vertex,label={[defaultLabel]below left:2}] (2) at (1. , 2.) {}; 
\node[vertex,label={[defaultLabel]below left:3}] (3) at (1. , 3.) {}; 
\node[vertex,label={[defaultLabel]below left:4}] (4) at (1. , 4.) {}; 
\node[vertex,label={[defaultLabel]below left:5}] (5) at (2. , 1.) {}; 
\node[vertex,label={[defaultLabel]below left:6}] (6) at (2. , 2.) {}; 
\node[vertex,label={[defaultLabel]below left:7}] (7) at (2. , 3.) {}; 
\node[vertex,label={[defaultLabel]below left:8}] (8) at (2. , 4.) {}; 
\node[vertex,label={[defaultLabel]below left:9}] (9) at (3. , 1.) {}; 
\node[vertex,label={[defaultLabel]below left:10}] (10) at (3. , 2.) {}; 
\node[vertex,label={[defaultLabel]below left:11}] (11) at (3. , 3.) {}; 
\node[vertex,label={[defaultLabel]below left:12}] (12) at (3. , 4.) {}; 
\node[vertex,label={[defaultLabel]below left:13}] (13) at (4. , 1.) {}; 
\node[vertex,label={[defaultLabel]below left:14}] (14) at (4. , 2.) {}; 
\node[vertex,label={[defaultLabel]below left:15}] (15) at (4. , 3.) {}; 
\node[vertex,label={[defaultLabel]below left:16}] (16) at (4. , 4.) {}; 
\node[vertex,label={[defaultLabel]below left:17}] (17) at (5. , 1.) {}; 
\node[vertex,label={[defaultLabel]below left:18}] (18) at (5. , 2.) {}; 
\node[vertex,label={[defaultLabel]below left:19}] (19) at (5. , 3.) {}; 
\node[vertex,label={[defaultLabel]below left:20}] (20) at (5. , 4.) {}; 
\draw[edge,](1) -- (2); 
\draw[edge,](1) -- (5); 
\draw[edge,](2) -- (3); 
\draw[edge,](2) -- (6); 
\draw[edge,](3) -- (4); 
\draw[edge,](3) -- (7); 
\draw[edge,](4) -- (8); 
\draw[edge,](5) -- (6); 
\draw[edge,](5) -- (9); 
\draw[edge,](6) -- (7); 
\draw[edge,](6) -- (10); 
\draw[edge,](7) -- (8); 
\draw[edge,](7) -- (11); 
\draw[edge,](8) -- (12); 
\draw[edge,](9) -- (10); 
\draw[edge,](9) -- (13); 
\draw[edge,](10) -- (11); 
\draw[edge,](10) -- (14); 
\draw[edge,](11) -- (12); 
\draw[edge,](11) -- (15); 
\draw[edge,](12) -- (16); 
\draw[edge,](13) -- (14); 
\draw[edge,](13) -- (17); 
\draw[edge,](14) -- (15); 
\draw[edge,](14) -- (18); 
\draw[edge,](15) -- (16); 
\draw[edge,](15) -- (19); 
\draw[edge,](16) -- (20); 
\draw[edge,](17) -- (18); 
\draw[edge,](18) -- (19); 
\draw[edge,](19) -- (20); 

\draw[edge,path1] (1) -- (5);
\draw[edge,path1] (5) -- (6);
\draw[edge,path1] (6) -- (10);
\draw[edge,path1] (10) -- (14);
\draw[edge,path1] (14) -- (15);

\draw[edge,path2] (3) -- (2);
\draw[edge,path2] (2) -- (1);
\draw[edge,path2] (1) -- (5);
\draw[edge,path2] (5) -- (6);
\draw[edge,path2] (6) -- (10);
\draw[edge,path2] (10) -- (14);
\draw[edge,path2] (14) -- (15);
\end{tikzpicture}
	\end{center}
	\end{minipage}\hfil
	\caption{An illustration of applying one round of \textsc{ImproveSharedSubpath} (Algorithm~\ref{algo:improvePath}). Top left: a graph in which the two paths $P_i$ and $P_j$ shown in blue and green share the subpath (7,11,15). Top right: after calculating a shortest path from 7 to 15 for the combined weight (lines 3--8 in Algorithm~\ref{algo:improvePath}), the blue path contains the circle (6,7,6). Bottom left: this circle gets removed from the blue path. Bottom right:
	an updated shortest green path results in a new shared subpath that
	could be further improved by applying \textsc{ImproveSharedSubpath} recursively.}
	\label{fig:recursiveImprove}
\end{figure}
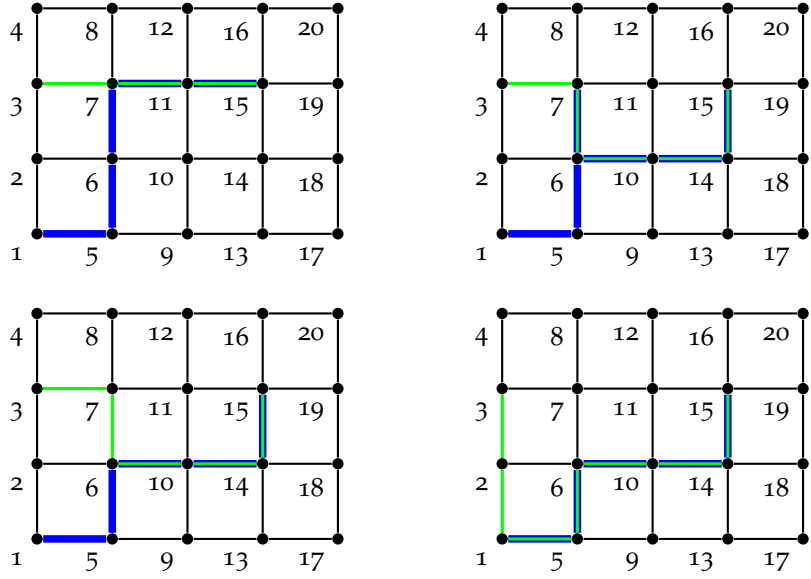

\subsection{Optimized Lensing Deformations}\label{subsec:algoLensing}
\begin{algorithm}[ht]
	\caption{Optimized Lensing Deformation}\label{algo:lensing}
	\begin{algorithmic}[1]
		\Procedure{LensingDeformation}{$G,\Gamma$}
		\State select $\Gamma_j$ with highest weight
		\For{ $\mathcal{D} \in \{LDU,LU,\dots\} $}
		\State $G^\mathcal{D} = (G_1,\dots,G_{j-1},\text{decomposition }\mathcal{D} \text{ of } G_j,\dots,G_{j+1},\dots,G_n)$
		\State $\Gamma^\mathcal{D} = (\Gamma_1,\dots,\Gamma_{j-1},\underbrace{\Gamma_j,\Gamma_j,\Gamma_j,\dots,\Gamma_j,\Gamma_j,\Gamma_j}_{\text{\# copies = \# factors in } \mathcal{D}},\Gamma_{j+1},\dots,\Gamma_n)$
			\State $(\tilde G^\mathcal{D},\tilde\Gamma^\mathcal{D}) = \text{SimpleDeformation}(G^\mathcal{D},\Gamma^\mathcal{D})$
		\EndFor
		\State return $(\tilde G^\mathcal{D},\tilde\Gamma^\mathcal{D})$ with lowest weight
		\EndProcedure
	\end{algorithmic}
\end{algorithm}

A single step of the optimized lensing deformation (Algorithm \ref{algo:lensing}) aims at improving the
dominant part of the contour by trying various decompositions (factorizations) of its jump matrix to which, then, the optimized simple deformation (Algorithm \ref{algo:optPath}) 
is applied. The contour parts which originate from such a lensing deformation (e.g. $L$, $D$ and $U$ in Fig.~\ref{fig:lensing}) have, however, to satisfy an additional constraint, namely condition (iv) of §\ref{sec:deform}: their spatial
order has to be preserved. To calculate shortest paths (line \ref{line:shortestpaths} in Algorithm~\ref{algo:optPath}) subject to this additional condition, we distinguish between the following three cases (an illustration can be found in Fig.~\ref{fig:algoLensing}): 

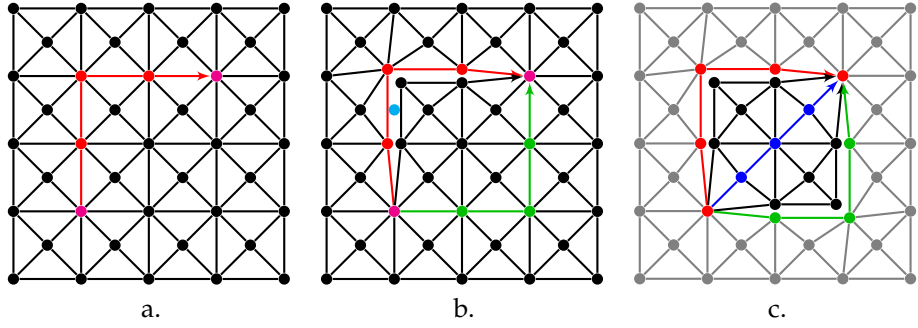
\begin{figure}[h]
	\begin{minipage}{0.30\textwidth}
		\begin{center}
			\begin{tikzpicture}
[vertex/.style={circle,minimum size=1.5mm,inner sep=0pt,fill=black},
edge/.style={thick},
splitVertex/.style={fill=red},
splitLabel/.style={red},
highlightLabel/.style={blue},
defaultLabel/.style={black},
newEdge/.style={red},
changedEdge/.style={red},
end/.style={->,>=latex'},
leftEnd/.style={<-,>=latex'},
rightEnd/.style={->,>=latex'},
highlightEdge/.style={blue},
highlightVertex/.style={blue},
scale=0.9]\node[vertex,] (1) at (1. , 1.) {}; 
\node[vertex,] (2) at (1. , 2.) {}; 
\node[vertex,] (3) at (1. , 3.) {}; 
\node[vertex,] (4) at (1. , 4.) {}; 
\node[vertex,] (5) at (1. , 5.) {}; 
\node[vertex,] (6) at (2. , 1.) {}; 
\node[vertex,magenta] (7) at (2. , 2.) {}; 
\node[vertex,red] (8) at (2. , 3.) {}; 
\node[vertex,red] (9) at (2. , 4.) {}; 
\node[vertex,] (10) at (2. , 5.) {}; 
\node[vertex,] (11) at (3. , 1.) {}; 
\node[vertex,] (12) at (3. , 2.) {}; 
\node[vertex,] (13) at (3. , 3.) {}; 
\node[vertex,red] (14) at (3. , 4.) {}; 
\node[vertex,] (15) at (3. , 5.) {}; 
\node[vertex,] (16) at (4. , 1.) {}; 
\node[vertex,] (17) at (4. , 2.) {}; 
\node[vertex,] (18) at (4. , 3.) {}; 
\node[vertex,magenta] (19) at (4. , 4.) {}; 
\node[vertex,] (20) at (4. , 5.) {}; 
\node[vertex,] (21) at (5. , 1.) {}; 
\node[vertex,] (22) at (5. , 2.) {}; 
\node[vertex,] (23) at (5. , 3.) {}; 
\node[vertex,] (24) at (5. , 4.) {}; 
\node[vertex,] (25) at (5. , 5.) {}; 
\node[vertex,] (26) at (1.5 , 1.5) {}; 
\node[vertex,] (27) at (1.5 , 2.5) {}; 
\node[vertex,] (28) at (1.5 , 3.5) {}; 
\node[vertex,] (29) at (1.5 , 4.5) {}; 
\node[vertex,] (30) at (2.5 , 1.5) {}; 
\node[vertex,] (31) at (2.5 , 2.5) {}; 
\node[vertex,] (32) at (2.5 , 3.5) {}; 
\node[vertex,] (33) at (2.5 , 4.5) {}; 
\node[vertex,] (34) at (3.5 , 1.5) {}; 
\node[vertex,] (35) at (3.5 , 2.5) {}; 
\node[vertex,] (36) at (3.5 , 3.5) {}; 
\node[vertex,] (37) at (3.5 , 4.5) {}; 
\node[vertex,] (38) at (4.5 , 1.5) {}; 
\node[vertex,] (39) at (4.5 , 2.5) {}; 
\node[vertex,] (40) at (4.5 , 3.5) {}; 
\node[vertex,] (41) at (4.5 , 4.5) {}; 
\draw[edge,](1) -- (2) ; 
\draw[edge,](1) -- (6) ; 
\draw[edge,](1) -- (26) ; 
\draw[edge,](2) -- (3) ; 
\draw[edge,](2) -- (7) ; 
\draw[edge,](2) -- (26) ; 
\draw[edge,](2) -- (27) ; 
\draw[edge,](3) -- (4) ; 
\draw[edge,](3) -- (8) ; 
\draw[edge,](3) -- (27) ; 
\draw[edge,](3) -- (28) ; 
\draw[edge,](4) -- (5) ; 
\draw[edge,](4) -- (9) ; 
\draw[edge,](4) -- (28) ; 
\draw[edge,](4) -- (29) ; 
\draw[edge,](5) -- (10) ; 
\draw[edge,](5) -- (29) ; 
\draw[edge,](6) -- (7) ; 
\draw[edge,](6) -- (11) ; 
\draw[edge,](6) -- (26) ; 
\draw[edge,](6) -- (30) ; 
\draw[edge,red](7) -- (8) ; 
\draw[edge,](7) -- (12) ; 
\draw[edge,](7) -- (26) ; 
\draw[edge,](7) -- (27) ; 
\draw[edge,](7) -- (30) ; 
\draw[edge,](7) -- (31) ; 
\draw[edge,red](8) -- (9) ; 
\draw[edge,](8) -- (13) ; 
\draw[edge,](8) -- (27) ; 
\draw[edge,](8) -- (28) ; 
\draw[edge,](8) -- (31) ; 
\draw[edge,](8) -- (32) ; 
\draw[edge,](9) -- (10) ; 
\draw[edge,red](9) -- (14) ; 
\draw[edge,](9) -- (28) ; 
\draw[edge,](9) -- (29) ; 
\draw[edge,](9) -- (32) ; 
\draw[edge,](9) -- (33) ; 
\draw[edge,](10) -- (15) ; 
\draw[edge,](10) -- (29) ; 
\draw[edge,](10) -- (33) ; 
\draw[edge,](11) -- (12) ; 
\draw[edge,](11) -- (16) ; 
\draw[edge,](11) -- (30) ; 
\draw[edge,](11) -- (34) ; 
\draw[edge,](12) -- (13) ; 
\draw[edge,](12) -- (17) ; 
\draw[edge,](12) -- (30) ; 
\draw[edge,](12) -- (31) ; 
\draw[edge,](12) -- (34) ; 
\draw[edge,](12) -- (35) ; 
\draw[edge,](13) -- (14) ; 
\draw[edge,](13) -- (18) ; 
\draw[edge,](13) -- (31) ; 
\draw[edge,](13) -- (32) ; 
\draw[edge,](13) -- (35) ; 
\draw[edge,](13) -- (36) ; 
\draw[edge,](14) -- (15) ; 
\draw[edge,rightEnd,red](14) -- (19) ; 
\draw[edge,](14) -- (32) ; 
\draw[edge,](14) -- (33) ; 
\draw[edge,](14) -- (36) ; 
\draw[edge,](14) -- (37) ; 
\draw[edge,](15) -- (20) ; 
\draw[edge,](15) -- (33) ; 
\draw[edge,](15) -- (37) ; 
\draw[edge,](16) -- (17) ; 
\draw[edge,](16) -- (21) ; 
\draw[edge,](16) -- (34) ; 
\draw[edge,](16) -- (38) ; 
\draw[edge,](17) -- (18) ; 
\draw[edge,](17) -- (22) ; 
\draw[edge,](17) -- (34) ; 
\draw[edge,](17) -- (35) ; 
\draw[edge,](17) -- (38) ; 
\draw[edge,](17) -- (39) ; 
\draw[edge,](18) -- (19) ; 
\draw[edge,](18) -- (23) ; 
\draw[edge,](18) -- (35) ; 
\draw[edge,](18) -- (36) ; 
\draw[edge,](18) -- (39) ; 
\draw[edge,](18) -- (40) ; 
\draw[edge,](19) -- (20) ; 
\draw[edge,](19) -- (24) ; 
\draw[edge,](19) -- (36) ; 
\draw[edge,](19) -- (37) ; 
\draw[edge,](19) -- (40) ; 
\draw[edge,](19) -- (41) ; 
\draw[edge,](20) -- (25) ; 
\draw[edge,](20) -- (37) ; 
\draw[edge,](20) -- (41) ; 
\draw[edge,](21) -- (22) ; 
\draw[edge,](21) -- (38) ; 
\draw[edge,](22) -- (23) ; 
\draw[edge,](22) -- (38) ; 
\draw[edge,](22) -- (39) ; 
\draw[edge,](23) -- (24) ; 
\draw[edge,](23) -- (39) ; 
\draw[edge,](23) -- (40) ; 
\draw[edge,](24) -- (25) ; 
\draw[edge,](24) -- (40) ; 
\draw[edge,](24) -- (41) ; 
\draw[edge,](25) -- (41) ; 
\end{tikzpicture}
			a.
		\end{center}
	\end{minipage}
	\hfil
	\begin{minipage}{0.30\textwidth}
		\begin{center}
			\begin{tikzpicture}
[vertex/.style={circle,minimum size=1.5mm,inner sep=0pt,fill=black},
edge/.style={thick},
splitVertex/.style={},
splitLabel/.style={red},
highlightLabel/.style={blue},
defaultLabel/.style={black},
newEdge/.style={},
changedEdge/.style={},
end/.style={->,>=latex'},
leftEnd/.style={<-,>=latex'},
rightEnd/.style={->,>=latex'},
highlightEdge/.style={blue},
highlightVertex/.style={blue},
scale=0.9]\node[vertex,] (1) at (1. , 1.) {}; 
\node[vertex,] (2) at (1. , 2.) {}; 
\node[vertex,] (3) at (1. , 3.) {}; 
\node[vertex,] (4) at (1. , 4.) {}; 
\node[vertex,] (5) at (1. , 5.) {}; 
\node[vertex,] (6) at (2. , 1.) {}; 
\node[vertex,magenta] (7) at (2. , 2.) {}; 
\node[vertex,red] (8) at (1.9 , 3.) {}; 
\node[vertex,red] (9) at (1.9 , 4.1) {}; 
\node[vertex,] (10) at (2. , 5.) {}; 
\node[vertex,] (11) at (3. , 1.) {}; 
\node[vertex,green!75!black] (12) at (3. , 2.) {}; 
\node[vertex,] (13) at (3. , 3.) {}; 
\node[vertex,red] (14) at (3. , 4.1) {}; 
\node[vertex,] (15) at (3. , 5.) {}; 
\node[vertex,] (16) at (4. , 1.) {}; 
\node[vertex,green!75!black] (17) at (4. , 2.) {}; 
\node[vertex,green!75!black] (18) at (4. , 3.) {}; 
\node[vertex,magenta] (19) at (4. , 4.) {}; 
\node[vertex,] (20) at (4. , 5.) {}; 
\node[vertex,] (21) at (5. , 1.) {}; 
\node[vertex,] (22) at (5. , 2.) {}; 
\node[vertex,] (23) at (5. , 3.) {}; 
\node[vertex,] (24) at (5. , 4.) {}; 
\node[vertex,] (25) at (5. , 5.) {}; 
\node[vertex,] (26) at (1.5 , 1.5) {}; 
\node[vertex,] (27) at (1.5 , 2.5) {}; 
\node[vertex,] (28) at (1.5 , 3.5) {}; 
\node[vertex,] (29) at (1.5 , 4.5) {}; 
\node[vertex,] (30) at (2.5 , 1.5) {}; 
\node[vertex,] (31) at (2.5 , 2.5) {}; 
\node[vertex,] (32) at (2.5 , 3.5) {}; 
\node[vertex,] (33) at (2.5 , 4.5) {}; 
\node[vertex,] (34) at (3.5 , 1.5) {}; 
\node[vertex,] (35) at (3.5 , 2.5) {}; 
\node[vertex,] (36) at (3.5 , 3.5) {}; 
\node[vertex,] (37) at (3.5 , 4.5) {}; 
\node[vertex,] (38) at (4.5 , 1.5) {}; 
\node[vertex,] (39) at (4.5 , 2.5) {}; 
\node[vertex,] (40) at (4.5 , 3.5) {}; 
\node[vertex,] (41) at (4.5 , 4.5) {}; 
\node[vertex,splitVertex,black] (42) at (2.1 , 3.) {}; 
\node[vertex,splitVertex,black] (43) at (2.1 , 3.9) {}; 
\node[vertex,splitVertex,black] (44) at (3. , 3.9) {}; 
\draw[edge,](1) -- (2) ; 
\draw[edge,](1) -- (6) ; 
\draw[edge,](1) -- (26) ; 
\draw[edge,](2) -- (3) ; 
\draw[edge,](2) -- (7) ; 
\draw[edge,](2) -- (26) ; 
\draw[edge,](2) -- (27) ; 
\draw[edge,](3) -- (4) ; 
\draw[edge,](3) -- (8) ; 
\draw[edge,](3) -- (27) ; 
\draw[edge,](3) -- (28) ; 
\draw[edge,](4) -- (5) ; 
\draw[edge,](4) -- (9) ; 
\draw[edge,](4) -- (28) ; 
\draw[edge,](4) -- (29) ; 
\draw[edge,](5) -- (10) ; 
\draw[edge,](5) -- (29) ; 
\draw[edge,](6) -- (7) ; 
\draw[edge,](6) -- (11) ; 
\draw[edge,](6) -- (26) ; 
\draw[edge,](6) -- (30) ; 
\draw[edge,red](7) -- (8) ; 
\draw[edge,green!75!black](7) -- (12) ; 
\draw[edge,](7) -- (26) ; 
\draw[edge,](7) -- (27) ; 
\draw[edge,](7) -- (30) ; 
\draw[edge,](7) -- (31) ; 
\draw[edge,changedEdge,black](7) -- (42) ; 
\draw[edge,red](8) -- (9) ; 
\draw[edge,](8) -- (27) ; 
\draw[edge,](8) -- (28) ; 
\draw[edge,](9) -- (10) ; 
\draw[edge,red](9) -- (14) ; 
\draw[edge,](9) -- (28) ; 
\draw[edge,](9) -- (29) ; 
\draw[edge,](9) -- (33) ; 
\draw[edge,](10) -- (15) ; 
\draw[edge,](10) -- (29) ; 
\draw[edge,](10) -- (33) ; 
\draw[edge,](11) -- (12) ; 
\draw[edge,](11) -- (16) ; 
\draw[edge,](11) -- (30) ; 
\draw[edge,](11) -- (34) ; 
\draw[edge,](12) -- (13) ; 
\draw[edge,green!75!black](12) -- (17) ; 
\draw[edge,](12) -- (30) ; 
\draw[edge,](12) -- (31) ; 
\draw[edge,](12) -- (34) ; 
\draw[edge,](12) -- (35) ; 
\draw[edge,](13) -- (18) ; 
\draw[edge,](13) -- (31) ; 
\draw[edge,](13) -- (32) ; 
\draw[edge,](13) -- (35) ; 
\draw[edge,](13) -- (36) ; 
\draw[edge,changedEdge,](13) -- (42) ; 
\draw[edge,changedEdge,](13) -- (44) ; 
\draw[edge,](14) -- (15) ; 
\draw[edge,rightEnd,red](14) -- (19) ; 
\draw[edge,](14) -- (33) ; 
\draw[edge,](14) -- (37) ; 
\draw[edge,](15) -- (20) ; 
\draw[edge,](15) -- (33) ; 
\draw[edge,](15) -- (37) ; 
\draw[edge,](16) -- (17) ; 
\draw[edge,](16) -- (21) ; 
\draw[edge,](16) -- (34) ; 
\draw[edge,](16) -- (38) ; 
\draw[edge,green!75!black](17) -- (18) ; 
\draw[edge,](17) -- (22) ; 
\draw[edge,](17) -- (34) ; 
\draw[edge,](17) -- (35) ; 
\draw[edge,](17) -- (38) ; 
\draw[edge,](17) -- (39) ; 
\draw[edge,rightEnd,green!75!black](18) -- (19) ; 
\draw[edge,](18) -- (23) ; 
\draw[edge,](18) -- (35) ; 
\draw[edge,](18) -- (36) ; 
\draw[edge,](18) -- (39) ; 
\draw[edge,](18) -- (40) ; 
\draw[edge,](19) -- (20) ; 
\draw[edge,](19) -- (24) ; 
\draw[edge,](19) -- (36) ; 
\draw[edge,](19) -- (37) ; 
\draw[edge,](19) -- (40) ; 
\draw[edge,](19) -- (41) ; 
\draw[edge,changedEdge,leftEnd,black](19) -- (44) ; 
\draw[edge,](20) -- (25) ; 
\draw[edge,](20) -- (37) ; 
\draw[edge,](20) -- (41) ; 
\draw[edge,](21) -- (22) ; 
\draw[edge,](21) -- (38) ; 
\draw[edge,](22) -- (23) ; 
\draw[edge,](22) -- (38) ; 
\draw[edge,](22) -- (39) ; 
\draw[edge,](23) -- (24) ; 
\draw[edge,](23) -- (39) ; 
\draw[edge,](23) -- (40) ; 
\draw[edge,](24) -- (25) ; 
\draw[edge,](24) -- (40) ; 
\draw[edge,](24) -- (41) ; 
\draw[edge,](25) -- (41) ; 
\draw[edge,changedEdge,](31) -- (42) ; 
\draw[edge,changedEdge,](32) -- (42) ; 
\draw[edge,changedEdge,](32) -- (43) ; 
\draw[edge,changedEdge,](32) -- (44) ; 
\draw[edge,changedEdge,](36) -- (44) ; 
\draw[edge,newEdge,black](42) -- (43) ; 
\draw[edge,newEdge,black](43) -- (44) ; 
\node[vertex,cyan] (*) at (2,3.5) {}; \end{tikzpicture}
			b.
		\end{center}
	\end{minipage}
	\hfil
	\begin{minipage}{0.30\textwidth}
		\begin{center}
			\begin{tikzpicture}
[vertex/.style={circle,minimum size=1.5mm,inner sep=0pt,fill=black},
edge/.style={thick},
splitVertex/.style={},
splitLabel/.style={red},
highlightLabel/.style={blue},
defaultLabel/.style={black},
newEdge/.style={},
changedEdge/.style={},
end/.style={->,>=latex'},
leftEnd/.style={<-,>=latex'},
rightEnd/.style={->,>=latex'},
highlightEdge/.style={blue},
highlightVertex/.style={blue},
scale=0.9]\node[vertex,gray] (1) at (1. , 1.) {}; 
\node[vertex,gray] (2) at (1. , 2.) {}; 
\node[vertex,gray] (3) at (1. , 3.) {}; 
\node[vertex,gray] (4) at (1. , 4.) {}; 
\node[vertex,gray] (5) at (1. , 5.) {}; 
\node[vertex,gray] (6) at (2. , 1.) {}; 
\node[vertex,red] (7) at (2. , 2.) {}; 
\node[vertex,red] (8) at (1.9 , 3.) {}; 
\node[vertex,red] (9) at (1.9 , 4.1) {}; 
\node[vertex,gray] (10) at (2. , 5.) {}; 
\node[vertex,gray] (11) at (3. , 1.) {}; 
\node[vertex,black] (12) at (3. , 2.1) {}; 
\node[vertex,blue] (13) at (3. , 3.) {}; 
\node[vertex,red] (14) at (3. , 4.1) {}; 
\node[vertex,gray] (15) at (3. , 5.) {}; 
\node[vertex,gray] (16) at (4. , 1.) {}; 
\node[vertex,black] (17) at (3.9 , 2.1) {}; 
\node[vertex,black] (18) at (3.9 , 3.) {}; 
\node[vertex,red] (19) at (4. , 4.) {}; 
\node[vertex,gray] (20) at (4. , 5.) {}; 
\node[vertex,gray] (21) at (5. , 1.) {}; 
\node[vertex,gray] (22) at (5. , 2.) {}; 
\node[vertex,gray] (23) at (5. , 3.) {}; 
\node[vertex,gray] (24) at (5. , 4.) {}; 
\node[vertex,gray] (25) at (5. , 5.) {}; 
\node[vertex,gray] (26) at (1.5 , 1.5) {}; 
\node[vertex,gray] (27) at (1.5 , 2.5) {}; 
\node[vertex,gray] (28) at (1.5 , 3.5) {}; 
\node[vertex,gray] (29) at (1.5 , 4.5) {}; 
\node[vertex,gray] (30) at (2.5 , 1.5) {}; 
\node[vertex,blue] (31) at (2.5 , 2.5) {}; 
\node[vertex,] (32) at (2.5 , 3.5) {}; 
\node[vertex,gray] (33) at (2.5 , 4.5) {}; 
\node[vertex,gray] (34) at (3.5 , 1.5) {}; 
\node[vertex,] (35) at (3.5 , 2.5) {}; 
\node[vertex,blue] (36) at (3.5 , 3.5) {}; 
\node[vertex,gray] (37) at (3.5 , 4.5) {}; 
\node[vertex,gray] (38) at (4.5 , 1.5) {}; 
\node[vertex,gray] (39) at (4.5 , 2.5) {}; 
\node[vertex,gray] (40) at (4.5 , 3.5) {}; 
\node[vertex,gray] (41) at (4.5 , 4.5) {}; 
\node[vertex,splitVertex,black] (42) at (2.1 , 3.) {}; 
\node[vertex,splitVertex,black] (43) at (2.1 , 3.9) {}; 
\node[vertex,splitVertex,black] (44) at (3. , 3.9) {}; 
\node[vertex,splitVertex,green!75!black] (45) at (3. , 1.9) {}; 
\node[vertex,splitVertex,green!75!black] (46) at (4.1 , 1.9) {}; 
\node[vertex,splitVertex,green!75!black] (47) at (4.1 , 3.) {}; 
\draw[edge,gray](1) -- (2) ; 
\draw[edge,gray](1) -- (6) ; 
\draw[edge,gray](1) -- (26) ; 
\draw[edge,gray](2) -- (3) ; 
\draw[edge,gray](2) -- (7) ; 
\draw[edge,gray](2) -- (26) ; 
\draw[edge,gray](2) -- (27) ; 
\draw[edge,gray](3) -- (4) ; 
\draw[edge,gray](3) -- (8) ; 
\draw[edge,gray](3) -- (27) ; 
\draw[edge,gray](3) -- (28) ; 
\draw[edge,gray](4) -- (5) ; 
\draw[edge,gray](4) -- (9) ; 
\draw[edge,gray](4) -- (28) ; 
\draw[edge,gray](4) -- (29) ; 
\draw[edge,gray](5) -- (10) ; 
\draw[edge,gray](5) -- (29) ; 
\draw[edge,gray](6) -- (7) ; 
\draw[edge,gray](6) -- (11) ; 
\draw[edge,gray](6) -- (26) ; 
\draw[edge,gray](6) -- (30) ; 
\draw[edge,red](7) -- (8) ; 
\draw[edge,black](7) -- (12) ; 
\draw[edge,gray](7) -- (26) ; 
\draw[edge,gray](7) -- (27) ; 
\draw[edge,gray](7) -- (30) ; 
\draw[edge,blue](7) -- (31) ; 
\draw[edge,changedEdge,black](7) -- (42) ; 
\draw[edge,changedEdge,green!75!black](7) -- (45) ; 
\draw[edge,red](8) -- (9) ; 
\draw[edge,gray](8) -- (27) ; 
\draw[edge,gray](8) -- (28) ; 
\draw[edge,gray](9) -- (10) ; 
\draw[edge,red](9) -- (14) ; 
\draw[edge,gray](9) -- (28) ; 
\draw[edge,gray](9) -- (29) ; 
\draw[edge,gray](9) -- (33) ; 
\draw[edge,gray](10) -- (15) ; 
\draw[edge,gray](10) -- (29) ; 
\draw[edge,gray](10) -- (33) ; 
\draw[edge,gray](11) -- (16) ; 
\draw[edge,gray](11) -- (30) ; 
\draw[edge,gray](11) -- (34) ; 
\draw[edge,changedEdge,gray](11) -- (45) ; 
\draw[edge,](12) -- (13) ; 
\draw[edge,black](12) -- (17) ; 
\draw[edge,](12) -- (31) ; 
\draw[edge,](12) -- (35) ; 
\draw[edge,](13) -- (18) ; 
\draw[edge,blue](13) -- (31) ; 
\draw[edge,](13) -- (32) ; 
\draw[edge,](13) -- (35) ; 
\draw[edge,blue](13) -- (36) ; 
\draw[edge,changedEdge,](13) -- (42) ; 
\draw[edge,changedEdge,](13) -- (44) ; 
\draw[edge,gray](14) -- (15) ; 
\draw[edge,rightEnd,red](14) -- (19) ; 
\draw[edge,gray](14) -- (33) ; 
\draw[edge,gray](14) -- (37) ; 
\draw[edge,gray](15) -- (20) ; 
\draw[edge,gray](15) -- (33) ; 
\draw[edge,gray](15) -- (37) ; 
\draw[edge,gray](16) -- (21) ; 
\draw[edge,gray](16) -- (34) ; 
\draw[edge,gray](16) -- (38) ; 
\draw[edge,changedEdge,gray](16) -- (46) ; 
\draw[edge](17) -- (18) ; 
\draw[edge,](17) -- (35) ; 
\draw[edge,rightEnd,black](18) -- (19) ; 
\draw[edge,](18) -- (35) ; 
\draw[edge,](18) -- (36) ; 
\draw[edge,gray](19) -- (20) ; 
\draw[edge,gray](19) -- (24) ; 
\draw[edge,leftEnd,blue](19) -- (36) ; 
\draw[edge,gray](19) -- (37) ; 
\draw[edge,gray](19) -- (40) ; 
\draw[edge,gray](19) -- (41) ; 
\draw[edge,changedEdge,leftEnd,black](19) -- (44) ; 
\draw[edge,changedEdge,leftEnd,green!75!black](19) -- (47) ; 
\draw[edge,gray](20) -- (25) ; 
\draw[edge,gray](20) -- (37) ; 
\draw[edge,gray](20) -- (41) ; 
\draw[edge,gray](21) -- (22) ; 
\draw[edge,gray](21) -- (38) ; 
\draw[edge,gray](22) -- (23) ; 
\draw[edge,gray](22) -- (38) ; 
\draw[edge,gray](22) -- (39) ; 
\draw[edge,changedEdge,gray](22) -- (46) ; 
\draw[edge,gray](23) -- (24) ; 
\draw[edge,gray](23) -- (39) ; 
\draw[edge,gray](23) -- (40) ; 
\draw[edge,changedEdge,gray](23) -- (47) ; 
\draw[edge,gray](24) -- (25) ; 
\draw[edge,gray](24) -- (40) ; 
\draw[edge,gray](24) -- (41) ; 
\draw[edge,gray](25) -- (41) ; 
\draw[edge,changedEdge,gray](30) -- (45) ; 
\draw[edge,changedEdge,](31) -- (42) ; 
\draw[edge,changedEdge,](32) -- (42) ; 
\draw[edge,changedEdge,](32) -- (43) ; 
\draw[edge,changedEdge,](32) -- (44) ; 
\draw[edge,changedEdge,gray](34) -- (45) ; 
\draw[edge,changedEdge,gray](34) -- (46) ; 
\draw[edge,changedEdge,](36) -- (44) ; 
\draw[edge,changedEdge,gray](38) -- (46) ; 
\draw[edge,changedEdge,gray](39) -- (46) ; 
\draw[edge,changedEdge,gray](39) -- (47) ; 
\draw[edge,changedEdge,gray](40) -- (47) ; 
\draw[edge,newEdge,black](42) -- (43) ; 
\draw[edge,newEdge,black](43) -- (44) ; 
\draw[edge,newEdge,green!75!black](45) -- (46) ; 
\draw[edge,newEdge,green!75!black](46) -- (47) ; 
\end{tikzpicture}
			c.
		\end{center}
	\end{minipage}
	\caption{Illustration of the shortest path calculations done for a lensing deformation as in Fig.~\ref{fig:lensing}.
	Here, all three cases appear: 
	\textbf{a.} All parts of the decomposition are still unprocessed (case 1). A shortest path $p_i$ (red) has to be calculated between the endpoints (magenta) of $\Gamma_i$.
	\textbf{b.} A path $P_l$ for the contour part $\Gamma_l$ (red) to the left of $\Gamma_i$ has already been fixed and the graph has been split accordingly (case 2). Now, $p_i$ must be constructed as the shortest path subject to the constraint that the circle $p_i \join P_l$ {\em encloses} some point $c$ in the split (shown in cyan). All edges are undirected, the arrows just indicate the directions of the paths. 
	\textbf{c.}  $P_l$ (red) to the left side of $p_i$ (blue) and $P_r$ (green) to the right side of $p_i$ have already been fixed and the graph split accordingly (case 3). All vertices that are removed from the graph before calculating the
	shortest path $p_i$ are shown in gray.}\label{fig:algoLensing}

\end{figure}

\begin{itemize}
\item Notation: $T$ denotes the list of indices of the contour parts that are created by the decomposition and $i$ denotes the index for which a shortest path is currently calculated. We recall that $Q$ denotes the
parts of the contour that have not yet been fixed in the course of Algorithm~\ref{algo:optPath}.\\*[-2mm]
	\item Case 1: $T \cap Q = T$. \\*[2mm]
	As no path belonging to $T$ has been fixed, there are no constraints yet to be observed and we can simply calculate the shortest path for $i$.\\*[-2mm]

	\item Case 2: either $\exists l \in T \backslash Q$ with $\Gamma_l$ left of $\Gamma_i$ or $\exists r \in T \backslash Q$ with $\Gamma_r$ right of $\Gamma_i$.\\*[2mm]  
	Without loss of generality, we assume that we have to construct a shortest path $\Gamma_i$ subject to the constraint that it is to the right of an already fixed path $P_l$. Now, let $c$ be a point that is located between the left and right side of the split in the graph $g_i$ caused by $P_l$. Then, the order constraint is identical to finding the shortest path $p_i$ for which the circle formed by joining $p$ and $P_l$ encloses $c$. 
Lemma~\ref{lemma:shortestEnclosingArc}, stated at the end of this section, will show that $p_i$ is actually given by
	\begin{align*}	
		\Pi &= \{\text{sp}(g_i, v_i^l, u) \join uw \join \text{sp}(g_i, w, v_i^r):\; uw \text{ edge in $g_i$} \},\\*[2mm]
		p_i &= \argmin \{ d(p) : p \in \Pi;\; \ind(p \join P_l,c)= \pm 1 \}.
	\end{align*}
	Hence, $p_i$ can be constructed by a minor modification of the polynomial algorithm \cite{provan89} for shortest enclosing circles 
    in embedded graphs.\\*[-2mm]
	
	\item Case 3: $\exists l \in T \backslash Q$ with $\Gamma_l$ left of $\Gamma_i$ and $\exists r \in T \backslash Q$ with $\Gamma_r$ right of $\Gamma_i$.\\*[2mm]
	The shortest path for $\Gamma_i$, subject to the constraint that it is right of $\Gamma_l$ and left of $\Gamma_r$, can only contain vertices inside the circle formed by joining $P_l$ and $P_r$. Therefore, we construct $p_i$ as 
	the shortest path in a smaller graph, in which all vertices outside of this circle have been removed.
	
\end{itemize}

\begin{lemma}\label{lemma:shortestEnclosingArc}
Let $q = (q_1,\dots,q_n)$ be a path in a weighted planar graph $g = (V,E)$, let $c$ be a point considered\footnote{We can chose any point on $q$ for $c$ and treat it as if it were right of $q_-$ and left of $q_+$.} to be in the split along $q$ in $g[q]$ and define
\[
	\Pi = \{\spa(g[q], q_1, u) \join uv \join \spa(g[q], v, q_n):\; uv \in E \}.
	\]
Then the shortest walk $p_\star$ in $g$, subject to the constraint $\ind(p_\star \join \overleftarrow{q_\mp}, c) = \pm 1$, satisfies
\begin{equation}
	p_\star = \argmin\{d(p) :\; p \in \Pi;\; \ind(p \join \overleftarrow{q_\mp},c)= \pm 1 \}.
	\label{eq:shortestEnlcosingArc}
\end{equation}

\end{lemma}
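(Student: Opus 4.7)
The plan is to adapt the shortest-enclosing-cycle framework of Provan~\cite{provan89} to our boundary-value setting, where the walk has prescribed endpoints $q_1$ and $q_n$. The first move is to rephrase the winding condition combinatorially: call an edge $uv \in E$ a \emph{crossing edge} if its endpoints land on opposite sides of the split along $q$ in $g[q]$. Since $c$ sits in the slit between the copies $q_-$ and $q_+$, a standard transversality argument identifies the constraint $\ind(p \join \overleftarrow{q_\mp}, c) = \pm 1$ with the parity condition that $p$ traverses an odd number of crossing edges. Under this translation, the walks in $\Pi$ meeting the constraint are exactly those $P_{uv} = \spa(g[q], q_1, u) \join uv \join \spa(g[q], v, q_n)$ built from a crossing edge $uv$, and each such $P_{uv}$ has $uv$ as its only crossing.

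With this setup, the lemma reduces to exhibiting, for any shortest walk $p_\star$ meeting the constraint, an element of $\Pi$ with no larger weight that also meets the constraint; the reverse inequality is trivial. I plan to do this by first reducing to the case where $p_\star$ uses exactly one crossing edge. Suppose $p_\star$ used $k \ge 3$ crossings $e_1, \ldots, e_k$ in the order they appear along $p_\star$. The subwalk $\sigma_j$ of $p_\star$ between two consecutive crossings $e_j, e_{j+1}$ lies on a single side of the split, so the block $e_j \join \sigma_j \join e_{j+1}$ has both endpoints on the opposite side. Replacing this block by the shortest same-side walk in $g[q]$ between those endpoints produces an admissible walk with two fewer crossings; the minimality of $p_\star$ forces the new walk to have weight at least $d(p_\star)$, and an interchange argument exploiting the planar embedding rules out strict inequality. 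Iterating reduces the crossing count to one while preserving the weight.

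Once $p_\star$ is assumed to have a unique crossing edge $uv$, we decompose $p_\star = p_\star[q_1, u] \join uv \join p_\star[v, q_n]$ with both subwalks now living in $g[q]$. The shortest-path optimality of $p_\star$ then forces each subwalk to coincide with $\spa(g[q], q_1, u)$ and $\spa(g[q], v, q_n)$, respectively, for otherwise the shortest-path substitution in $g[q]$ would yield a strictly shorter admissible walk; the winding number is preserved because any two walks in $g[q]$ between the same endpoints on the same simply connected side of the split close with $\overleftarrow{q_\mp}$ to cycles of the same winding class around $c$. Hence $p_\star$ itself lies in $\Pi$ and realizes the minimum on the right-hand side of \eqref{eq:shortestEnlcosingArc}.

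\textbf{Main obstacle.} The delicate point is the crossing-reduction step, specifically the claim that the same-side shortcut produces a walk of weight \emph{equal} to (and not strictly greater than) $d(p_\star)$. The interchange argument sketched above must be made rigorous using the planar embedding of $g$ together with a case analysis for degenerate configurations --- for instance, when two consecutive crossings share a vertex of $q$, or when one side of the split fails to be connected. Once this reduction is in place, the remaining ingredients (the parity criterion, the matching of single-crossing walks with $\Pi$, and the invariance of the winding number under homotopic substitutions) are routine.
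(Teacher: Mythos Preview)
Your approach diverges substantially from the paper's, and the step you yourself flag as the main obstacle is a genuine gap that I do not see how to close along the lines you sketch. When you replace the block $e_j \join \sigma_j \join e_{j+1}$ by the shortest same-side walk $S$ in $g[q]$, minimality of $p_\star$ only gives $d(S) \ge d(\text{block})$; you need the reverse inequality to preserve the weight. There is no evident interchange available: the block is a detour through the opposite side, $S$ is the best route staying on this side, and nothing in the planar structure forces $S$ to be competitive --- the far side may simply be locally cheaper between those two endpoints. So a shortest constrained walk may genuinely require three or more crossings, and no weight-preserving reduction to one crossing need exist. There is also a more basic issue with your setup: since $q$ is a path \emph{in} the planar graph $g$, no edge of $g$ transversally crosses $q$; after splitting, every edge of $g[q]$ has both endpoints on the same side of the slit. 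Your notion of ``crossing edge'' is therefore vacuous as stated. To salvage it you would need to pass to the infinite cyclic cover (where ``crossing'' means changing sheets), but there the parity of sheet-changes equals only the parity of the winding number, not the winding number itself, so odd parity does not pin down $\ind = \pm 1$.

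The paper's proof avoids all crossing-count bookkeeping. It argues by contradiction: among shortest constrained walks choose $p_\star$ enclosing the fewest vertices, and suppose $p_\star \notin \Pi$. Then there is a vertex $v$ with $p_\star = l \join v \join r$ such that replacing $l$ by $s_l := \spa(g[q], q_1, v)$ or $r$ by $s_r := \spa(g[q], v, q_n)$ destroys the winding condition. The paper then shows in four steps that (i) $s_l, s_r$ avoid the interior of $W = p_\star \join \overleftarrow{q}$ (else one could shrink the enclosed region); (ii) by winding-number additivity and a Jordan-curve argument, the cycles $W^l_1 = l \join \overleftarrow{s_l}$ and $W^r_1 = r \join \overleftarrow{s_r}$ each enclose $c$; (iii) $W^l_1$ encloses $q_n$ but not $q_1$, and $W^r_1$ the reverse; hence (iv) the two cycles must cross at two or more vertices --- yet $s_l$ and $s_r$, being shortest paths, cannot cross each other, and by (i) neither can cross $l$ or $r$, leaving $v$ as the sole possible crossing. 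This contradiction is obtained from a single well-chosen vertex and topological reasoning about simple closed curves, with no induction on the number of crossings.
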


\begin{proof}
We restrict ourselves to the case $\ind(p \join \overleftarrow{q})= 1$, because the proof for the other case differs just in the sign of some winding numbers. We will assume to the contrary that $p_\star$ is not given by \eqref{eq:shortestEnlcosingArc} and will get a contradiction. 

If there is more than one shortest walk $p_\star$, we choose the one which encloses the least number of vertices. If $p_\star \notin \Pi$, then there has to be a vertex $v \in p_\star$ with $p_\star = l \join v \join r$ such that $s_l = \text{sp}(g[q],q_1,v)$ and $s_r = \text{sp}(g[q],v,q_n)$ satisfy the following conditions
\begin{equation}\label{eq:proof:assumption}
		\ind( s_l \join r \join \overleftarrow{q}, c ) \ne 1,\qquad
		\ind( l \join s_r \join \overleftarrow{q}, c ) \ne 1.
\end{equation}
We will now show that there is no such vertex $v$. 

\medskip

\emph{Step 1.} To begin with, we prove for $W = p_\star \join \overleftarrow{q}$ that
\begin{equation}\label{eq:proof:step1}
    s_l \cap \interior(W) = \emptyset, \qquad
    s_r \cap \interior(W) = \emptyset.
\end{equation}
If \eqref{eq:proof:step1} would not hold then either $s_l$ or $s_r$ contains a path $p = (p_1,\dots,p_m)$ with $p_2,\dots,p_{m-1} \in \interior(W)$ and $p_1, p_m \in p_\star$. As $s_l$ and $s_r$ are shortest paths, such a subpath $p$ has to be the shortest path from $p_1$ to $p_m$. Consequently, the walk 
\[
	W' = p_\star[q_1,p_1] \join p \join p_\star[p_m,q_n] \join \overleftarrow{q}
\]
would satisfy
\[
	d(W') \le d(W), \qquad \ind(W',c) = 1, \qquad |\interior(W')| < |\interior(W)|,
\]
which contradicts our choice of $p_\star$. Therefore, \eqref{eq:proof:step1} holds.
\medskip

\emph{Step 2.} We now prove that 
\begin{equation}\label{eq:proof:step2}
	\begin{aligned}
		\ind(W^l_1,c) &= 1 \quad \text{with } W^l_1 = l \join \overleftarrow{s_l},\\*[2mm]
		\ind(W^r_1,c) &= 1 \quad \text{with } W^r_1 = r \join \overleftarrow{s_r}.
	\end{aligned}
\end{equation}
To this end, we consider the walk  
\begin{equation*}
	W^l = l \join \overleftarrow{s_l} \join s_l \join r \join \overleftarrow{q}
\end{equation*}
which consists of the two circles
\[
	W^l_1 = l \join \overleftarrow{s_l}, \qquad
	W^l_2 = s_l \join r \join \overleftarrow{q},
\]
and satisfies
\begin{equation}\label{eq:proof:enclose}
	\ind(W,c) = \ind(W^l,c) = \ind(W^l_1,c') + \ind(W^l_2,c') = 1.
\end{equation}
If $s_l \cap W = \emptyset$, then neither $W^l_1$ nor $W^l_2$ contains any vertex more than once. As $g$ is a planar graph, these walks correspond to simple closed curves in the complex plane and therefore their winding numbers around $c$ can only be $-1$, $0$ or $1$. If we also take \eqref{eq:proof:enclose} and \eqref{eq:proof:assumption} into account, the only possible option is 
\[
	\ind(W^l_1,c) = 1, \qquad
	\ind(W^l_2,c) = 0.
\]
Unfortunately, $s_l \cap W = \emptyset$ does not necessarily have to be satisfied. But $s_l$ cannot cross $l$ and $r \join \overleftarrow{q}$ because of \eqref{eq:proof:step1}, which means that $$s_l = \spa(g[q],q_1,v) = \spa(g[q,\overleftarrow{l},q\join\overleftarrow{r}],q_1,v).$$ Hence, we can find the following walks in $g[q,\overleftarrow{l},q\join\overleftarrow{r}]$
\begin{equation*}
		U^l_1 = l_+ \join \overleftarrow{s_l}, \qquad
		U^l_2 = s_l \join r_+ \join \overleftarrow{q_+},
\end{equation*}
which are equivalent to $W^l_1$ and $W^l_2$ but do not contain duplicate vertices. If we move the paths along the splits $\overleftarrow{l}$ and $q\join\overleftarrow{r}$ a little bit apart, then $U^l_1$ and $U^l_2$ correspond to simple connected curves, too. As we can move the split paths apart without crossing $c$ or any other part of $U^l_1$ or $U^l_2$, these walks can only have winding numbers of $-1$, $0$ or $1$ with respect to $c$. This means that $W^l_1$ and $W^l_2$ can only have these winding numbers even if $s_l \cap W \ne \emptyset$. This proves the first relation in \eqref{eq:proof:step2}. The second one follows likewise.

\medskip

\emph{Step 3.} We claim that
\begin{equation}\label{eq:proof:step3}
	\begin{aligned}
		q_n &\in \interior(W^l_1),\quad q_n \notin \interior(W^r_1),\\*[2mm]
		q_1 &\in \interior(W^r_1),\quad q_1 \notin \interior(W^l_1).
	\end{aligned}
\end{equation}
Combining \eqref{eq:proof:step1} and \eqref{eq:proof:step2} yields $\interior(W) \subseteq \interior(W^l_1)$. Therefore all vertices in $W$ either have to be in $\interior(W^l_1)$ or in $W^l_1$. It follows that we just have to show that $q_n \notin W^l_1$. We consider the following closed walk 
\[
	W' = W^l_1 \join W^r_1 = l \join r \join \overleftarrow{s_r} \join \overleftarrow{s_l}
\]
with
\[
	\ind(W') = \ind(W^l_1) + \ind(W^r_1) = 2.
\]
If $q_n \in W^l_1$, then $s_l[q_n,v] = \overleftarrow{s_r}$ and, consequently, $W'$ contains a circle that does not enclose any vertex. Removing this circle from $W'$ results in the walk
\[
	W'' = l \join r \join \overleftarrow{s_l}[q_n,q_1] 
\]
without changing the winding number. So $\ind(W'') = 2$, and the argument that we have used before to show $\ind(W^l_1,c) \in \{-1,0,1\}$ works for $W''$, too. Consequently, we get $q_n \notin W^l_1$. The second claim in \eqref{eq:proof:step3} follows likewise.

\medskip

\emph{Step 4.} (see Fig.~\ref{fig:proofStep4}) We combine the results of the previous steps to show that our initial assumption leads to a contradiction. As $c \in \interior(W^l_1) \cap \interior(W^r_1)$ by \eqref{eq:proof:step2}, the two circles $W^l_1$ and $W^r_1$ have a non empty intersection. Furthermore, because of \eqref{eq:proof:step3}, none of them is completely contained within the other. It follows that $W^l_1$ and $W^r_1$ have to cross each other at two or more points. One of these can be $v$, but there is actually no other vertex at which the circles could cross: $s_l$ cannot cross $r$ and, vice versa, $s_r$ cannot cross $l$ due to \eqref{eq:proof:step1}; also $s_l$ and $s_r$ cannot cross because they are both shortest paths. 
\end{proof}

\usetikzlibrary{shapes.geometric}
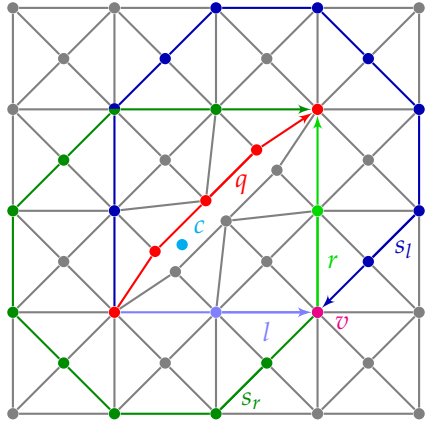
\begin{figure}[tbp]
	\begin{center}
		\begin{tikzpicture}
[vertex/.style={circle,minimum size=1.5mm,inner sep=0pt,fill=gray},
edge/.style={gray,thick},
splitVertex/.style={},
splitLabel/.style={red},
highlightLabel/.style={blue},
defaultLabel/.style={black},
newEdge/.style={},
changedEdge/.style={},
end/.style={->,>=latex'},
leftEnd/.style={<-,>=latex'},
rightEnd/.style={->,>=latex'},
highlightEdge/.style={blue},
highlightVertex/.style={blue},
sl/.style={blue!70!black},
pl/.style={blue!50!white},
sr/.style={green!55!black},
pr/.style={green!85!black}
,scale=0.9]\node[vertex,] (1) at (1. , 1.) {}; 
\node[vertex,sr] (2) at (1. , 2.5) {}; 
\node[vertex,sr] (3) at (1. , 4.) {}; 
\node[vertex,] (4) at (1. , 5.5) {}; 
\node[vertex,] (5) at (1. , 7.) {}; 
\node[vertex,sr] (6) at (2.5 , 1.) {}; 
\node[vertex,red] (7) at (2.5 , 2.5) {}; 
\node[vertex,sl] (8) at (2.5 , 4.) {}; 
\node[vertex,sr] (9) at (2.5 , 5.5) {}; 
\node[vertex,] (10) at (2.5 , 7.) {}; 
\node[vertex,sr] (11) at (4. , 1.) {}; 
\node[vertex,pl] (12) at (4. , 2.5) {}; 
\node[vertex,red] (13) at (3.85 , 4.15) {}; 
\node[vertex,sr] (14) at (4. , 5.5) {}; 
\node[vertex,sl] (15) at (4. , 7.) {}; 
\node[vertex,] (16) at (5.5 , 1.) {}; 
\node[vertex,pr] (17) at (5.5 , 2.5) {}; 
\node[vertex,pr] (18) at (5.5 , 4.) {}; 
\node[vertex,red] (19) at (5.5 , 5.5) {}; 
\node[vertex,sl] (20) at (5.5 , 7.) {}; 
\node[vertex,] (21) at (7. , 1.) {}; 
\node[vertex,] (22) at (7. , 2.5) {}; 
\node[vertex,sl] (23) at (7. , 4.) {}; 
\node[vertex,sl] (24) at (7. , 5.5) {}; 
\node[vertex,] (25) at (7. , 7.) {}; 
\node[vertex,sr] (26) at (1.75 , 1.75) {}; 
\node[vertex,] (27) at (1.75 , 3.25) {}; 
\node[vertex,sr] (28) at (1.75 , 4.75) {}; 
\node[vertex,] (29) at (1.75 , 6.25) {}; 
\node[vertex,] (30) at (3.25 , 1.75) {}; 
\node[vertex,red] (31) at (3.1 , 3.4) {}; 
\node[vertex,] (32) at (3.25 , 4.75) {}; 
\node[vertex,sl] (33) at (3.25 , 6.25) {}; 
\node[vertex,sr] (34) at (4.75 , 1.75) {}; 
\node[vertex,] (35) at (4.75 , 3.25) {}; 
\node[vertex,red] (36) at (4.6 , 4.9) {}; 
\node[vertex,] (37) at (4.75 , 6.25) {}; 
\node[vertex,] (38) at (6.25 , 1.75) {}; 
\node[vertex,sl] (39) at (6.25 , 3.25) {}; 
\node[vertex,] (40) at (6.25 , 4.75) {}; 
\node[vertex,sl] (41) at (6.25 , 6.25) {}; 
\node[vertex,splitVertex,] (42) at (3.4 , 3.1) {}; 
\node[vertex,splitVertex,] (43) at (4.15 , 3.85) {}; 
\node[vertex,splitVertex,] (44) at (4.9 , 4.6) {}; 
\draw[edge,](1) -- (2) ; 
\draw[edge,](1) -- (6) ; 
\draw[edge,](1) -- (26) ; 
\draw[edge,sr](2) -- (3) ; 
\draw[edge,](2) -- (7) ; 
\draw[edge,sr](2) -- (26) ; 
\draw[edge,](2) -- (27) ; 
\draw[edge,](3) -- (4) ; 
\draw[edge,](3) -- (8) ; 
\draw[edge,](3) -- (27) ; 
\draw[edge,sr](3) -- (28) ; 
\draw[edge,](4) -- (5) ; 
\draw[edge,](4) -- (9) ; 
\draw[edge,](4) -- (28) ; 
\draw[edge,](4) -- (29) ; 
\draw[edge,](5) -- (10) ; 
\draw[edge,](5) -- (29) ; 
\draw[edge,](6) -- (7) ; 
\draw[edge,sr](6) -- (11) ; 
\draw[edge,sr](6) -- (26) ; 
\draw[edge,](6) -- (30) ; 
\draw[edge,sl](7) -- (8) ; 
\draw[edge,pl](7) -- (12) ; 
\draw[edge,](7) -- (26) ; 
\draw[edge,](7) -- (27) ; 
\draw[edge,](7) -- (30) ; 
\draw[edge,red](7) -- (31) ; 
\draw[edge,changedEdge,](7) -- (42) ; 
\draw[edge,sl](8) -- (9) ; 
\draw[edge,](8) -- (13) ; 
\draw[edge,](8) -- (27) ; 
\draw[edge,](8) -- (28) ; 
\draw[edge,](8) -- (31) ; 
\draw[edge,](8) -- (32) ; 
\draw[edge,](9) -- (10) ; 
\draw[edge,sr](9) -- (14) ; 
\draw[edge,sr](9) -- (28) ; 
\draw[edge,](9) -- (29) ; 
\draw[edge,](9) -- (32) ; 
\draw[edge,sl](9) -- (33) ; 
\draw[edge,](10) -- (15) ; 
\draw[edge,](10) -- (29) ; 
\draw[edge,](10) -- (33) ; 
\draw[edge,](11) -- (12) ; 
\draw[edge,](11) -- (16) ; 
\draw[edge,](11) -- (30) ; 
\draw[edge,sr](11) -- (34) ; 
\draw[edge,rightEnd,pl](12) -- (17) ; 
\draw[edge,](12) -- (30) ; 
\draw[edge,](12) -- (34) ; 
\draw[edge,](12) -- (35) ; 
\draw[edge,changedEdge,](12) -- (42) ; 
\draw[edge,changedEdge,](12) -- (43) ; 
\draw[edge,](13) -- (14) ; 
\draw[edge,red](13) -- (31) ; 
\draw[edge,](13) -- (32) ; 
\draw[edge,red](13) -- (36) ; 
\draw[edge,](14) -- (15) ; 
\draw[edge,rightEnd,sr](14) -- (19) ; 
\draw[edge,](14) -- (32) ; 
\draw[edge,](14) -- (33) ; 
\draw[edge,](14) -- (36) ; 
\draw[edge,](14) -- (37) ; 
\draw[edge,sl](15) -- (20) ; 
\draw[edge,sl](15) -- (33) ; 
\draw[edge,](15) -- (37) ; 
\draw[edge,](16) -- (17) ; 
\draw[edge,](16) -- (21) ; 
\draw[edge,](16) -- (34) ; 
\draw[edge,](16) -- (38) ; 
\draw[edge,pr](17) -- (18) ; 
\draw[edge,](17) -- (22) ; 
\draw[edge,sr](17) -- (34) ; 
\draw[edge,](17) -- (35) ; 
\draw[edge,](17) -- (38) ; 
\draw[edge,leftEnd,sl](17) -- (39) ; 
\draw[edge,rightEnd,pr](18) -- (19) ; 
\draw[edge,](18) -- (23) ; 
\draw[edge,](18) -- (35) ; 
\draw[edge,](18) -- (39) ; 
\draw[edge,](18) -- (40) ; 
\draw[edge,changedEdge,](18) -- (43) ; 
\draw[edge,changedEdge,](18) -- (44) ; 
\draw[edge,](19) -- (20) ; 
\draw[edge,](19) -- (24) ; 
\draw[edge,leftEnd,red](19) -- (36) ; 
\draw[edge,](19) -- (37) ; 
\draw[edge,](19) -- (40) ; 
\draw[edge,](19) -- (41) ; 
\draw[edge,changedEdge,](19) -- (44) ; 
\draw[edge,](20) -- (25) ; 
\draw[edge,](20) -- (37) ; 
\draw[edge,sl](20) -- (41) ; 
\draw[edge,](21) -- (22) ; 
\draw[edge,](21) -- (38) ; 
\draw[edge,](22) -- (23) ; 
\draw[edge,](22) -- (38) ; 
\draw[edge,](22) -- (39) ; 
\draw[edge,sl](23) -- (24) ; 
\draw[edge,sl](23) -- (39) ; 
\draw[edge,](23) -- (40) ; 
\draw[edge,](24) -- (25) ; 
\draw[edge,](24) -- (40) ; 
\draw[edge,sl](24) -- (41) ; 
\draw[edge,](25) -- (41) ; 
\draw[edge,changedEdge,](35) -- (43) ; 
\draw[edge,newEdge,](42) -- (43) ; 
\draw[edge,newEdge,](43) -- (44) ; 
\node[vertex,cyan,label={above right,label distance=-1,cyan:$c$}] (*) at (3.5,3.5) {};
\draw[edge,red] (13) -- (36) node[near end,below]{$q$}; 
\draw[edge,pl,rightEnd] (12) -- (17) node[midway,below]{$l$}; 
\draw[edge,pr] (17) -- (18) node[midway,right]{$r$}; 
\draw[edge,sl] (23) -- (39) node[near start,below,inner ysep=5]{$s_l$}; 
\draw[edge,sr] (34) -- (11) node[near start,below,inner ysep=5]{$s_r$};
\node[vertex,sl,shape=semicircle,minimum size=0.75mm,anchor=south] (s) at (2.5, 5.5) {};
\node[vertex,magenta,label={below right,label distance=2,yshift=5,magenta:$v$}] at (17) {};

\end{tikzpicture}
	\end{center}
	\caption{Illustration of Step 4 in the proof of Lemma~\ref{lemma:shortestEnclosingArc}.}
	\label{fig:proofStep4}
\end{figure}

\section{Implementation Details}\label{sec:implDetails}

\subsection{The Weights}
The weight $d_e$ of an edge $e$ should be an approximation of
\[
\int_e \| G(z) - \I \| \,d |z|
\] 
with a suitable matrix norm. Our experiments indicate that we generally need fewer collocation points
 if we aim at minimizing all components of $G-I$ instead of just focussing on its largest component, for which reason
 we choose the Frobenius (or Hilbert-Schmidt) norm. The integral is sufficiently well approximated by the two point trapezoidal quadrature rule (we recall that the aim of optimizing the weight is just preconditioning, that is, getting a particular good \emph{order
 of magnitude} of the condition number). We thus take
\[
	d_e = \frac12 |b-a| ( \|G(b) - \I \|_F + \|G(a) - \I \|_F )
\]
as the weight of an edge $e$ with the endpoints $a$ and $b$.

\subsection{The Graph}
The algorithm of §\ref{sec:algorithm} is based on {\em planar} graphs. If the graph were not planar, paths
could cross each other even without having any vertices in common and, therefore, the graph splitting described in §\ref{sec:deform} would not ensure that paths calculated by Algorithm \ref{algo:optPath} do not cross. 
We choose planar graphs built from rectangular grids to which a vertex in the center of each box is added that is connected to the vertices of the that box. 
Such a graph is chosen to subdivide a rectangle that contains all finite endpoints of $\Gamma$. We take 
this rectangle large enough so that outside of it $\|G - \I\|_F$ is below machine precision on all arcs with an infinite endpoint, see Fig.~\ref{fig:rectangle}. For numerical purposes, the jump matrix $G$ is then  indistinguishable from the identity matrix in the
exterior of this rectangle: the RHP needs only to be solved in the interior.

\begin{figure}[tbp]
	\begin{center}
	\begin{minipage}{0.47\textwidth}
		\begin{center}
		\includegraphics[width=\textwidth]{jumpScalePII--10-black}
		\end{center}
	\end{minipage}
	\hfil
	\begin{minipage}{0.47\textwidth}
		\begin{center}
		\includegraphics[width=\textwidth]{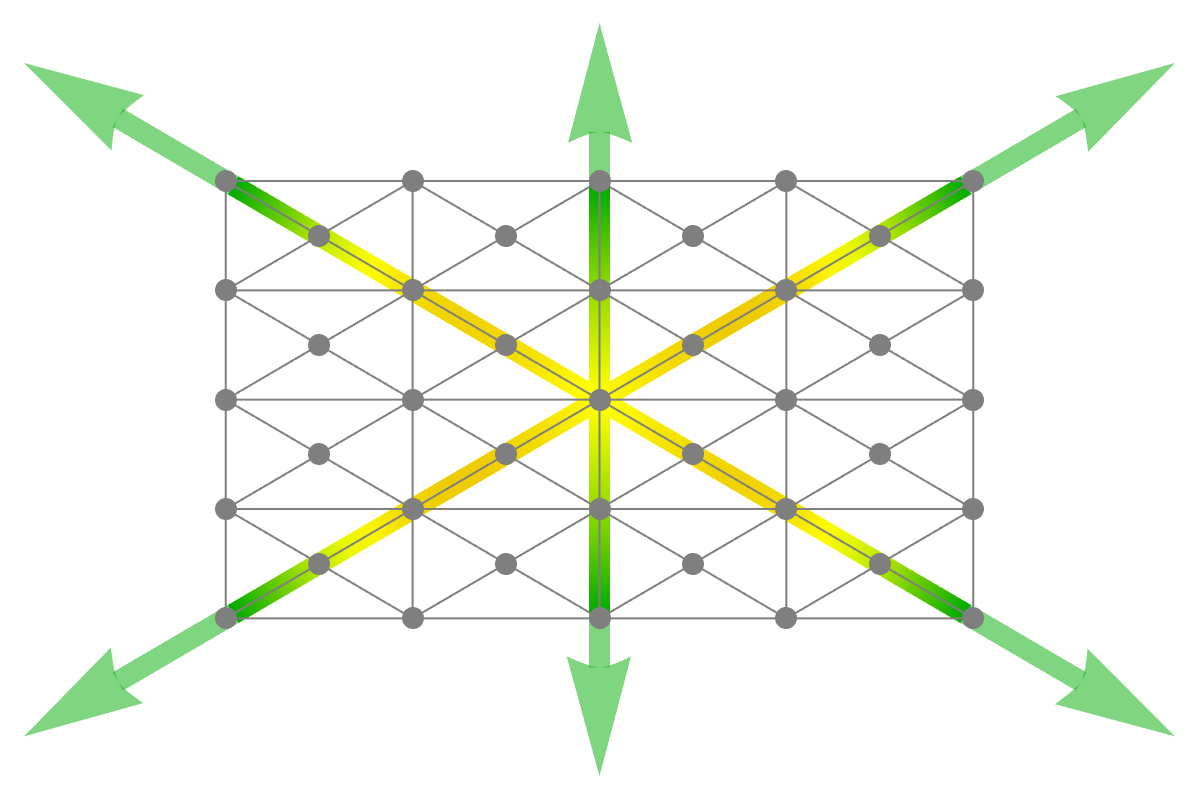}
		\end{center}
	\end{minipage}\\
	\end{center}
	\vspace{0.25cm}
	\caption{The rectangle to be covered by the grid is determined by the condition $\|G-\I\|_F > 10^{-16}$ along $\Gamma$. 
		The color coding is as in Fig.~\ref{fig:results}. }\label{fig:rectangle}
\end{figure}

\subsection{Contour Simplification}\label{subsec:contourSimplification}
The algorithm described in §\ref{sec:algorithm} returns a contour composed of a set of paths in the underlying graph. 
The collocation method of Olver \cite{Olver:2011:NSR:1967343.1967345}, which is 
finally employed for the numerical solution of the RHP, would have to place individual Chebyshev points on each smooth (that is, linear) part of this
piecewise linear contour. For efficiency reasons it would thus be preferable to have a contour with fewer breakpoints. Consequently, for each optimized path, we calculate a coarse piecewise linear approximation that has about the same weight. Quite often just a straight line connecting the endpoints of a path is already sufficient approximation. Fig.~\ref{fig:ContourSimplification} shows an example of this simplification process when applied to the
final contour of Fig.~\ref{fig:results}: it cuts the number of collocation points by more than a factor of two
while keeping the order of magnitude of the condition number constant.
 
\begin{figure}
	\begin{minipage}{0.65\textwidth}
	    ~~\\*[2mm]
		\begin{tikzpicture}
			\begin{semilogyaxis}[xlabel={\small total number of collocation points},
				ylabel={\small relative error},
				grid=major,
				no markers,
				legend pos=north east,
				]
								\addplot[very thick,red] file {plots/pII-10Conv-Adaptive.dat};
								\addplot[very thick,blue] file {plots/pII-10Conv-Adaptive-Simplified.dat};
				\legend{{\small optimized contour},{\small simplified contour}}
			\end{semilogyaxis}
		\end{tikzpicture}
	\end{minipage}
	\hfill
	\begin{minipage}{0.3\textwidth}
		\begin{center}
			{\small optimized contour}
			\includegraphics[width=\textwidth]{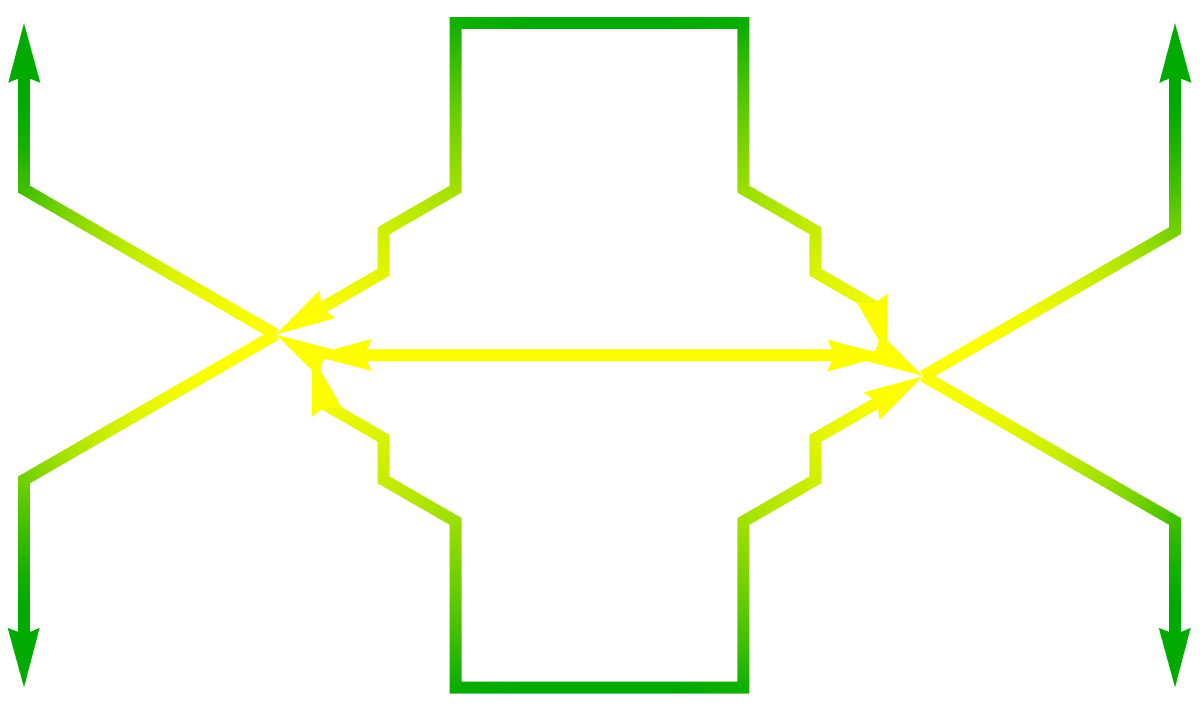}
			\vspace{0.2cm}\\
			{\small simplified contour}
			\includegraphics[width=\textwidth]{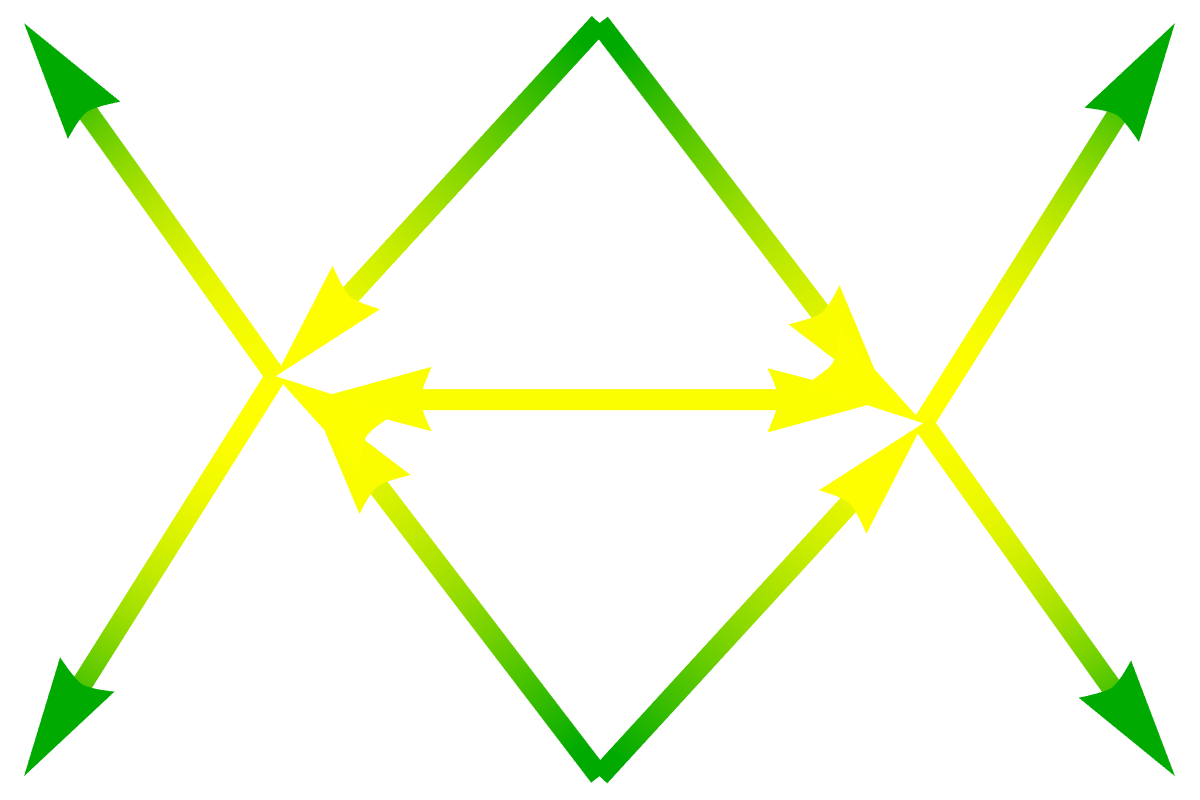}
			\vspace{0.4cm}\\
		\end{center}
	\end{minipage}
	\caption{Improvement of the convergence rate by contour simplification: 
	the simplified contour needs only about half the number of collocation points to reach the same accuracy as 
	the optimized contour of Fig.~\ref{fig:results}.d (the color coding is the same as there). This
	simplification does not, however, worsen the order of magnitude of the condition number which grows from about 140 to just about 200. The similarity with the manually constructed contour in Fig.~\ref{fig:olver} is even more
	striking after this simplification step.}\label{fig:ContourSimplification} 
\end{figure}

\section*{Conclusion}

The numerical results of this paper
show that our algorithm can significantly reduce the condition number of RHPs. As a feature,
this algorithm does not require any input, or knowledge, from the user other than the RHP at hand. Besides
being thus a very convenient tool for the numerical solution of RHPs, the deformations automatically constructed by this algorithm might even turn out to be useful for determining first drafts of
suitable deformations in the analytic study of RHPs.

\subsection*{Acknowledgement} 
This research was supported by the DFG-Collaborative Research Center, TRR 109, ``Discretization in Geometry and Dynamics''.

\bibliographystyle{plain}
\bibliography{autodeform}

\begin{thebibliography}{1}

\bibitem{1107.0498v2}
Folkmar Bornemann and Georg Wechslberger.
\newblock Optimal contours for high-order derivatives.
\newblock {\em IMA J. Numer. Anal.}, \textrm{(to appear)}, 2012.
\newblock e-print: {\tt arXiv:1107.0498v2}.

\bibitem{MR1207209}
P.~Deift and X.~Zhou.
\newblock A steepest descent method for oscillatory {R}iemann--{H}ilbert
  problems: {A}symptotics for the {MK}d{V} equation.
\newblock {\em Ann. of Math.}, 137:295--368, 1993.

\bibitem{MR1677884}
P.~A. Deift.
\newblock {\em Orthogonal polynomials and random matrices: a
  {R}iemann-{H}ilbert approach}.
\newblock American Mathematical Society, Providence, RI, 1999.

\bibitem{fokas:2006:ptr}
Athanassios~S. Fokas, Alexander~R. Its, Andrei~A. Kapaev, and Victor~Yu.
  Novokshenov.
\newblock {\em {Painlev\'e Transcendents: The Riemann-Hilbert Approach}}.
\newblock American Mathematical Society, Providence, RI, 2006.

\bibitem{MR2011605}
Alexander~R. Its.
\newblock The {R}iemann--{H}ilbert problem and integrable systems.
\newblock {\em Notices Amer. Math. Soc.}, 50:1389--1400, 2003.

\bibitem{Olver:2011:NSR:1967343.1967345}
Sheehan Olver.
\newblock Numerical solution of {R}iemann-{H}ilbert problems: {P}ainlev\'e
  {II}.
\newblock {\em Found. Comput. Math.}, 11:153--179, 2011.

\bibitem{1205.5604}
Sheehan Olver and Thomas Trogdon.
\newblock Nonlinear steepest descent and the numerical solution of
  {Riemann--Hilbert} problems, 2012.
\newblock e-print: {\tt arXiv:1205.5604}.

\bibitem{provan89}
J.~Scott Provan.
\newblock Shortest enclosing walks and cycles in embedded graphs.
\newblock {\em Inform. Process. Lett.}, 30:119--125, 1989.

\end{thebibliography}

\end{document}